\documentclass[journal]{IEEEtran}
\usepackage{amsmath, amssymb, amsthm}

\usepackage{cleveref}
\usepackage{graphicx} 
\usepackage{tikz}
\usetikzlibrary{positioning}
\usepackage{pgfplots}
\usepackage{tikz-3dplot}
\usetikzlibrary{arrows.meta}
\usetikzlibrary{cd,calc}
\usetikzlibrary{decorations.pathmorphing}
\usetikzlibrary{3d,calc}
\tikzset{>=latex}
\graphicspath{{./Figs/}}
\usepgfplotslibrary{colorbrewer}
\pgfplotsset{compat=1.18}
\usepackage{booktabs}

\definecolor{myblue}{RGB}{87, 163, 213}
\ifCLASSOPTIONcompsoc
    \usepackage[caption=false, font=normalsize, labelfont=sf, textfont=sf]{subfig}
\else
\usepackage[caption=false, font=footnotesize]{subfig}
\fi

\newcommand{\E}{\mathbb{E}}
\renewcommand{\H}{\mathsf{H}}
\newcommand{\Q}{\mathsf{Q}}
\newcommand{\X}{\mathsf{X}}
\newcommand{\G}{\mathsf{G}}
\newcommand{\T}{\mathsf{T}}
\newcommand{\pair}[1]{\ensuremath{\left\langle #1 \right\rangle}}
\renewcommand{\Re}{\ensuremath{\mathbb{R}}}

\newcommand{\deriv}[2]{\ensuremath{\frac{\partial #1}{\partial #2}}}

\newcommand{\parenth}[1]{\ensuremath{\left( #1 \right)}}
\renewcommand{\div}{\mathrm{div}}
\newcommand{\metric}{\ensuremath{g}}
\newcommand{\Sph}{\mathsf{S}}

\newtheorem{definition}{Definition}
\newtheorem{theorem}{Theorem}
\newtheorem{remark}{Remark}
\newtheorem{assumption}{Assumption}
\newtheorem{example}{Example}
\Crefname{assumption}{Assumption}{Assumptions}

\title{Transfer Operators for Stochastic Hybrid Systems on Manifolds with Guard-Induced Resets}

\author{Tejaswi K. C., William A. Clark, and Taeyoung Lee
	\thanks{Tejaswi K.C., Taeyoung Lee, Mechanical and Aerospace Engineering, George Washington University, Washington, DC 20052, {\tt \{kctejaswi99,tylee\}@gwu.edu}}%
    \thanks{William A. Clark, Mathematics, Ohio University, Athens, OH 45701 {\tt clarkw3@ohio.edu}}
	\thanks{\textsuperscript{\footnotesize\ensuremath{*}}This research has been supported in part by AFOSR MURI FA9550-23-1-0400, and ONR N00014-23-1-2850.}
}

\begin{document}
	\allowdisplaybreaks
	\maketitle \thispagestyle{empty} \pagestyle{empty}

\begin{abstract}
	This paper develops a transfer operator framework for stochastic hybrid systems with guard-induced resets, encompassing both the Koopman and Frobenius--Perron operators.  
	Exploiting their duality, we derive a unified formulation in which observables and probability densities evolve under adjoint generators corresponding to the backward and forward Kolmogorov equations.  
	The formulation is developed in a global and intrinsic manner on differentiable manifolds, ensuring consistency with the underlying geometric structure of the state space.  
	In addition, we propose a finite volume computational scheme on manifolds that preserves total probability mass while accurately capturing fluxes across guards and reset-induced transfers.
	The proposed framework provides a unified and geometrically consistent approach to uncertainty propagation in stochastic hybrid systems, bridging continuous stochastic dynamics and hybrid transitions within a transfer operator perspective.
\end{abstract}

\section{Introduction}

Hybrid dynamical systems provide a natural mathematical framework for modeling systems that exhibit both continuous-time evolution and event-driven, instantaneous changes~\cite{goebel2012hybrid}.  
This framework has been extended to stochastic hybrid systems~\cite{hespanha2004shs,hu2000towards,teel2014survey} to incorporate randomness and uncertainty in both the continuous and discrete dynamics.  
Such an extension is essential in realistic settings, where uncertainties are unavoidable due to stochastic disturbances, imperfect sensing, and environmental variability, and it has been applied across a wide range of domains, including communication networks~\cite{hespanha2004shs}, biological modeling~\cite{li2017shs-bio}, and neuroscience~\cite{bressloff2018cell-neuro}.

In stochastic dynamical systems, propagating uncertainty at the distributional level yields a deterministic and global description of system behavior, capturing the evolution of all possible realizations simultaneously and revealing intrinsic transport and transfer mechanisms across modes.  
This perspective is naturally formulated in terms of transfer operators, namely the Koopman and Frobenius--Perron operators, which are dual to each other: the Koopman operator governs the evolution of observables, while the Frobenius--Perron operator governs the evolution of probability densities~\cite{mezic2020spectrum,ikeda2022koopman}.  
At the infinitesimal level, their generators are adjoint operators, corresponding to the backward Kolmogorov equation for observables and the forward Kolmogorov (Fokker--Planck) equation for densities~\cite{risken1996fpe,pavliotis2014spa}.

Existing approaches to uncertainty propagation in hybrid systems can be broadly categorized as follows.  
For deterministic hybrid systems, transfer operator frameworks have been developed, and techniques such as modified Lie--Poisson reduction have been applied to reduce the complexity of mechanical impact systems, enabling efficient analysis of long-term asymptotic behavior~\cite{oprea2024study}.  
In addition, data-driven approaches based on Koopman operator theory have been proposed to learn representations of hybrid dynamics~\cite{bakker2020learningkoopmanrepresentationshybrid}.  
For stochastic hybrid systems with spontaneous jumps, uncertainty propagation has been studied in settings where transitions are triggered by Poisson processes, leading to coupled evolution equations that account for both diffusion and random switching mechanisms~\cite{hespanha2005model}.  
Such formulations have been utilized for Bayesian estimation~\cite{WanLeePACC20b} and extended to rigid body dynamics on Lie groups, where intrinsic computational schemes are developed via non-commutative harmonic analysis~\cite{WanLeeSJADS22}.  
Furthermore, it has been shown that multiple formulations of stochastic hybrid systems can arise depending on how randomness is introduced, as illustrated through one-dimensional examples~\cite{CClaLeePISNCS25}.

However, these formulations do not fully address the propagation of uncertainty in stochastic hybrid systems where discrete transitions are deterministically triggered by guard conditions and reset maps.  
In such systems, probability mass evolves continuously within each mode while undergoing discontinuous transfer across guards, resulting in a coupled evolution that is not captured by classical formulations developed for either purely continuous systems or spontaneously switching systems.  
This interplay between stochastic continuous flows and deterministic jumps arises naturally in many applications.  
For example, in the bouncing ball dynamics—one of the most fundamental examples in hybrid systems—the motion of the ball in the air is subject to stochastic perturbations, such as aerodynamic drag, while the impact event is deterministically triggered when the state reaches a guard set (the ground).

The objective of this paper is to develop a consistent transfer operator framework for stochastic hybrid systems with guard-induced resets.  
Specifically, we derive a density evolution equation that couples within-mode Fokker--Planck dynamics with explicit inter-mode transfer terms induced by reset maps, while ensuring conservation of total probability mass.  
This formulation provides a systematic and physically meaningful characterization of uncertainty propagation across hybrid modes.

To achieve this, we build on the duality between the Koopman and Frobenius--Perron operators and focus on the evolution of probability densities.  
The central idea is to represent the reset-induced transfer of probability explicitly as the image of probability flux across the guard under the reset map.  

This leads to a unified and interpretable description in which continuous transport and discrete transitions are incorporated within a single density evolution framework.  
Moreover, the proposed formulation is developed in a global and intrinsic manner on differentiable manifolds, avoiding reliance on local coordinates and ensuring consistency with the underlying geometric structure of the state space.

In short, the main contributions of this paper are summarized as follows:
\begin{itemize}
\item We develop a Frobenius--Perron formulation for stochastic hybrid systems with guard-induced resets, in which the evolution of probability densities is governed by coupled Fokker--Planck dynamics within each mode and inter-mode transfer mechanisms.

\item We establish a unified density evolution framework that captures both continuous transport and discontinuous transitions while ensuring conservation of total probability mass across modes.

\item We formulate the proposed framework in a global and intrinsic manner on differentiable manifolds, enabling a coordinate-free treatment that is consistent with the underlying geometric structure of the state space.

\item We develop a finite volume computational scheme on differentiable manifolds for uncertainty propagation, which preserves key conservation properties, including total probability mass, while accurately capturing fluxes across guard sets and reset-induced transfers between modes.
\end{itemize}

This paper is organized as follows.  
Stochastic hybrid systems and transfer operators are formulated in \Cref{sec:SHS}.  
Stochastic hybrid systems with a single mode are then presented in \Cref{sec:SHS1}, along with a one-dimensional example illustrating multiple types of hybrid behavior.  
Next, in \Cref{sec:multi_mode}, stochastic hybrid systems with multiple modes are analyzed, with an example of hybrid dynamics on a torus.  
Finally, conclusions are provided in \Cref{sec:conclusions}.

\section{Stochastic Hybrid Systems}\label{sec:SHS}

In this section, we formulate general stochastic hybrid systems (GSHS), which are uncertain dynamical systems that may undergo continuous-time evolution and discrete jumps. 
We then introduce the stochastic Koopman operator and the stochastic Frobenius--Perron operator that characterize the evolution of observables and densities.

\subsection{General Stochastic Hybrid Systems}

A general stochastic hybrid system (GSHS) is defined by the tuple $\mathcal{H} = (\H,\mathcal{F}, \mu, X, h, \G, \Phi)$
described as follows.

\begin{itemize}

	\item \textbf{Hybrid state space.}
		Let $\Q=\{1,2,\ldots,N_Q\}$ be the set of discrete modes. 
		For each $q\in\Q$, let $\X_q$ denote an $n_q$-dimensional Riemannian manifold with boundary.
		The hybrid state space is the disjoint union
		\begin{align}
			\H=\bigsqcup_{q\in\Q}\X_q = \bigcup_{q\in\Q}\{(x,q)\mid x\in\X_q\},
		\end{align}
		where $(x,q)$ denotes the hybrid state consisting of the continuous state $x$ and the discrete mode $q$.

	\item \textbf{$\sigma$-algebra.}
		The $\sigma$-algebra on $\H$ is defined by
		\begin{align}
			\mathcal{F}
			=
			\Bigl\{\bigsqcup_{q\in\Q} A_q \,\Big|\, A_q\in\mathcal{B}(\X_q)\Bigr\},
			\label{eqn:F_hyb}
		\end{align}
		where $\mathcal{B}(\X_q)$ denotes the Borel $\sigma$-algebra on $\X_q$.

	\item \textbf{Reference measure.}
		The measure $\mu:\mathcal{F}\to\mathbb{R}_{\ge0}$ is defined by
		\begin{align}
			\mu(A)=\sum_{q\in\Q}\mu_q(A\cap\X_q),
			\label{eqn:mu_hyb}
		\end{align}
		where $\mu_q$ is the Riemannian volume measure on $\X_q$.

	\item \textbf{Continuous stochastic dynamics.}
		In each mode $q$, the continuous state evolves according to the It\^{o} stochastic differential equation
		\begin{equation}
			dx = X(x,q)\,dt + \sum_{i=1}^{m} h_i(x,q) dW_i,
			\label{eqn:ContSDE}
		\end{equation}
		where $X(\cdot,q)\in\mathfrak{X}(\X_q)$ is the drift vector field, 
		$h_i(\cdot,q)\in\mathfrak{X}(\X_q)$ are diffusion vector fields, and $W_i$ are independent Wiener processes.

	\item \textbf{Guard set.}
		The guard set is $\G=\bigsqcup_{q\in\Q}\G_q$, where $\G_q\subset\partial\X_q$.
		A discrete transition is triggered when $(x,q)$ reaches $\G_q$.

	\item \textbf{Reset map.}
		The post-jump state is determined by a measurable reset map $\Phi:\G\to\H$, defined by
		\[
			(x^+,q^+) = \Phi(x^-,q^-).
		\]

\end{itemize}

The integral of an integrable function $f:\H\to\mathbb{R}$ is defined by 
\begin{align}
    \int_{\H} f(x,q)\,d\mu(x,q)
    =
    \sum_{q\in\Q}\int_{\X_q} f(x,q)\,d\mu_q(x).
\end{align}

Let $L^p(\H)$ denote the space of measurable functions $f:\H\to\mathbb{R}$ such that $|f|^p$ is integrable.
For $f\in L^\infty(\H)$ and $g\in L^1(\H)$, define the dual pairing
\begin{align}
    \pair{g,f}
    =
    \sum_{q\in\Q}\int_{\X_q} g(x,q)\,f(x,q)\,d\mu_q(x).
    \label{eqn:pair}
\end{align}

This formulation describes stochastic hybrid systems with continuous diffusion processes on Riemannian manifolds coupled with deterministic, guard-triggered discrete transitions.
It complements prior work on deterministic hybrid systems~\cite{oprea2024study} and stochastic hybrid systems with Poisson-driven jumps~\cite{hespanha2005stochastic}.

\subsection{Stochastic Koopman and Frobenius--Perron Operators}

The stochastic Koopman and Frobenius--Perron operators provide a unified framework for analyzing stochastic dynamical systems by connecting probabilistic, dynamical, and operator-theoretic viewpoints.

Let $z_t=(x_t,q_t)$ denote the hybrid stochastic process evolving on $\H$.
For deterministic systems, the Koopman operator $\mathcal{K}_t:L^\infty(\H)\to L^\infty(\H)$ is defined by $\mathcal{K}_t f(z)=f(z_t)$ for $z_0=z$.
Its stochastic counterpart is obtained by taking conditional expectation~\cite{wanner2022robust}.

\begin{definition}\label{def:SK}
    The stochastic Koopman operator $\mathcal{K}_t:L^\infty(\H)\to L^\infty(\H)$ for $t>0$ is defined by
    \begin{align}
        \mathcal{K}_t f(z) = \E[f(z_t)\mid z_0=z],
        \label{eqn:K0}
    \end{align}
    for $f\in L^\infty(\H)$.
    If the limit
    \begin{align}
        \mathcal{A}f(z) = \lim_{t\downarrow0}\frac{\mathcal{K}_t f(z)-f(z)}{t}
        \label{eqn:AA}
    \end{align}
    exists, then $f$ belongs to the domain of $\mathcal{A}$, and $\mathcal{A}$ is called the infinitesimal generator.
\end{definition}

The family $\{\mathcal{K}_t\}_{t\ge0}$ forms a semigroup,
\begin{align}
    \mathcal{K}_{t+s}=\mathcal{K}_t\circ\mathcal{K}_s,
    \label{eqn:semigroup}
\end{align}
and the generator satisfies $\mathcal{A}\mathcal{K}_t=\mathcal{K}_t\mathcal{A}$.
Let $u(t,z)=\mathcal{K}_t f(z)$.
If $f$ lies in the domain of $\mathcal{A}$, then
\begin{align}
    \frac{\partial u(t,z)}{\partial t} = \mathcal{A}u(t,z).
    \label{eqn:u_dot_Au}
\end{align}

The Frobenius--Perron operator is defined as the dual of the Koopman operator.
This definition extends naturally to stochastic and hybrid systems, where a direct flow-based interpretation is not available.

\begin{definition}
    The stochastic Frobenius--Perron operator $\mathcal{P}_t:L^1(\H)\to L^1(\H)$ is defined by
    \begin{align}
        \pair{g,\mathcal{K}_t f} = \pair{\mathcal{P}_t g,f},
        \label{eqn:adj_K_FP}
    \end{align}
    for all $f\in L^\infty(\H)$ and $g\in L^1(\H)$.
\end{definition}

From the duality \eqref{eqn:adj_K_FP} and the semigroup property \eqref{eqn:semigroup}, the family $\{\mathcal{P}_t\}_{t\ge0}$ also forms a semigroup,
\begin{align}
    \mathcal{P}_{t+s}=\mathcal{P}_t\circ\mathcal{P}_s,
\end{align}
whose generator is the adjoint $\mathcal{A}^*$ of $\mathcal{A}$.
Let $v(t,z)=\mathcal{P}_t g(z)$.
Then
\begin{align}
    \frac{\partial v(t,z)}{\partial t} = \mathcal{A}^*v(t,z).
    \label{eqn:v_dot_A*v}
\end{align}

A key role of the Frobenius--Perron operator is to propagate probability densities.
Let $p:\mathbb{R}_{\ge0}\times\H\to\mathbb{R}_{\ge0}$ be the density for the hybrid state satisfying
\[
    \mathbb{P}[z_t\in A]=\int_A p(t,z)\,d\mu(z),
\]
for $A\in\mathcal{F}$, where the left-hand side denotes the probability that $z_t$ belongs to $A$. 
Then for any test function $f:\H\to\mathbb{R}$, from the law of total expectation~\cite{weiss2006course}, we have
\begin{align*}
    \E[f(z_t)] &=\E[\E[f(z_t)\mid z_0=z]],
\end{align*}
where the outer mean is taken with respect to the initial state $z_0$.
More explicitly, using \eqref{eqn:K0}, 
\begin{align*}
	\E[f(z_t)] &=\int_{\H} p(0,z)\,\mathcal{K}_t f(z)\,d\mu(z)  = \pair{p(0,z), \mathcal{K}_t f(z)} \\
               &= \pair{ \mathcal{P}_t p(0,z), f(z)},
\end{align*}
where the last equality is from the duality given by \eqref{eqn:adj_K_FP}.
Since this holds for any arbitrary $f$, it implies that the density evolves as
\begin{align}
    p(t,z)=\mathcal{P}_t p(0,z).
    \label{eqn:FP0}
\end{align}
Thus \eqref{eqn:v_dot_A*v} corresponds to the Fokker--Planck equation when $v(0,z)$ is chosen as the initial density.

In the following sections, we derive explicit expressions for the generators $\mathcal A$ and $\mathcal A^*$ for stochastic hybrid systems.
We first consider the case of a single continuous mode to illustrate the generator structure and boundary identities, and then extend the results to stochastic hybrid systems with multiple modes and reset transitions.
The presence of guard-triggered resets introduces additional structure that is not captured by classical generator expressions, and requires a careful characterization of how probability is transferred across modes.

\section{Stochastic Hybrid Systems with a Single Mode}\label{sec:SHS1}

In this section, we consider the special case of a stochastic hybrid system with a single discrete mode, i.e., $\Q=\{1\}$. 
The hybrid state space reduces to an $n$-dimensional Riemannian manifold $\X$ with boundary and metric $\metric$.
The objective is to construct the stochastic Koopman and Frobenius--Perron operators for this system. 
These results will serve as the building block for the general multi-mode case developed in Section~\ref{sec:multi_mode}.
Throughout this section, the mode index is suppressed for brevity. 

\subsection{Problem Formulation}

\begin{assumption}[Continuous flow]\label{assump:flow}
    The stochastic differential equation for the continuous flow \eqref{eqn:ContSDE} can be written as
    \begin{align}
        dx = X(x)\,dt + \sum_{i=1}^{n}\sigma E_i(x)\,dW_i,
        \label{eqn:SDE1}
    \end{align}
    where $X\in\mathfrak{X}(\X)$ is the drift vector field, $\sigma>0$ is a constant diffusion strength, and $\{E_1(x),\ldots,E_n(x)\}$ is an orthonormal frame of $\X$.
\end{assumption}

In other words, for each $x\in\X$, the set $\{E_1(x),\ldots,E_n(x)\}$ forms a basis of the tangent space $T_x\X$ satisfying $\metric(E_i(x),E_j(x))=\delta_{ij}$, and the diffusion field is applied along these basis directions with equal strength $\sigma$. 
This construction is motivated by Brownian motion on a Riemannian manifold~\cite{lee2025brownian}, which yields an isotropic diffusion process. 
Although the results can be extended to anisotropic diffusions, we focus on the isotropic case to highlight the interaction between continuous stochastic flows and discrete jumps explicitly, without introducing additional complexities.

Next, we specify the boundary behavior and reset structure.
The reflecting boundary and absorbing boundary are defined as follows. 
\begin{definition}[Reflecting and Absorbing Boundaries]\label{def:boundary_single}
    Let $u(t,x), v(t,x):\Re\times\X\rightarrow\Re$ be the Koopman observable and the density function, respectively.

    \begin{itemize}
        \item A point $x\in\partial \X$ is said to be \underline{reflecting} if
        \begin{align}
            \nabla u(t,x)\cdot N(x) = 0, \qquad Y(t,x)\cdot N(x) = 0, \label{eqn:BC_uv_reflecting_single}
        \end{align}
        where $Y(t,\cdot)\in\mathfrak{X}(\X)$ is defined by
        \begin{align}
            Y(t,x) = v(t,x) X(x) - H \nabla v(t,x),
        \end{align}
        with $H=\frac{1}{2}\sigma^2>0$ and $N(x)$ is the outward-pointing unit normal vector at $x\in\partial \X$.

        \item A point $x\in\partial \X$ is said to be \underline{absorbing} if
        \begin{align}
            u(t,x) = 0, \qquad v(t,x) = 0. \label{eqn:BC_uv_absorbing_single}
        \end{align}
    \end{itemize}
\end{definition}
The vector field $Y=vX - H\nabla v$ represents the probability flux associated with the density $v$. 
Thus, the condition $Y\cdot N=0$ at a reflecting boundary indicates that no probability mass crosses the boundary, and all trajectories are confined within $\X$. 
The accompanying Neumann condition $\nabla u\cdot N=0$ ensures that the Koopman observable is consistent with this confinement. 
In contrast, at an absorbing boundary, the conditions $v=0$ and $u=0$ imply that trajectories reaching the boundary are removed from the system and contribute no future evolution, leading to a loss of total probability mass. 

\begin{assumption}[Boundary Decomposition and Reset Structure]\label{assump:jump}
    The boundary of $\X$ is decomposed into a disjoint union of the guard $\G$ and the physical boundary $\Gamma$ as
    \[
        \partial\X=\G\cup\Gamma, \qquad \G\cap\Gamma=\emptyset.
    \]
    Furthermore, the following properties are satisfied.

    \begin{itemize}

        \item \textbf{Reset on the guard.}
            Upon intersecting with the guard $\G$, the state resets according to
            \begin{align}
                x^+ = \Phi(x), \qquad x\in\G,
                \label{eqn:reset1}
            \end{align}
            where the reset image lies on the physical boundary,
            \[
                \Phi(\G)\subset\Gamma .
            \]

        \item \textbf{Guard trace condition.}
            Trajectories reaching the guard are instantaneously transferred by the reset map and therefore do not remain on $\G$. 
            Accordingly, the density satisfies
            \begin{align}
                v(t,x)=0, \qquad x\in\G .
                \label{eqn:BCv_guard_single}
            \end{align}

        \item \textbf{Guard--flux compatibility.}
            Probability mass leaving the domain through $\G$ is transferred by the reset map to its image $\Phi(\G)$ without accumulation or loss. 
            In the weak formulation, the outgoing flux across $\G$ is exactly balanced by the incoming flux on $\Phi(\G)$, so that the net contribution of each guard/reset-image pair vanishes.

        \item \textbf{Physical boundary on $\Gamma$.}
            The set $\Gamma$ is partitioned into reflecting and absorbing parts,
            \[
                \Gamma=\Gamma^{\rm ref}\cup\Gamma^{\rm abs},
                \qquad
                \Gamma^{\rm ref}\cap\Gamma^{\rm abs}=\emptyset .
            \]
            For $x\in\Gamma^{\rm ref}$ and $x\in\Gamma^{\rm abs}$, the boundary conditions in \Cref{def:boundary_single} are imposed.

            In particular, points in $\Phi(\G)$ may belong to $\Gamma^{\rm ref}$ or $\Gamma^{\rm abs}$; in such cases, the local boundary condition, such as \eqref{eqn:BC_uv_reflecting_single}, is combined with the incoming flux induced by the reset.
    \end{itemize}

\end{assumption}
In summary, the boundary of $\X$ consists of two distinct components.
The guard $\G$ acts as a transport boundary, through which probability mass is not lost but instead relocated by the reset map $\Phi$.
In contrast, $\Gamma$ represents the physical boundary, where reflecting parts confine the system and absorbing parts remove probability mass.
Thus, reflecting boundaries enforce zero flux, absorbing boundaries allow loss, and guards redirect probability mass through deterministic resets.
This distinction between transport through the guard and physical boundary behavior plays a central role in the operator-based formulation developed in the following sections.

\subsection{Stochastic Koopman/Frobenius-Perron Operator}

Next, we develop the stochastic Koopman operator and the Frobenius-Perron operator as follows. 

\begin{theorem}\label{thm:K2}
    Consider a hybrid system defined by \eqref{eqn:SDE1}--\eqref{eqn:reset1}.
    Let $f\in L^\infty(\X)$ and define $u(t,x)=\mathcal K_t f(x)$. Then $u$ satisfies
    \begin{align}
        \dfrac{\partial u(t,x)}{\partial t} & = \mathcal{A} u(t,x)\quad \mbox{for } x\in\X\setminus\G,\label{eqn:dotu2} \\
        u(t, x) & = u(t, \Phi(x)) \quad \mbox{for } x\in\G,\label{eqn:BCu2}
    \end{align}
    with the initial condition $u(0,x) = f(x)$, where the generator $\mathcal{A}$ is given on its domain by 
    \begin{align}
        \mathcal A u(t,x) &= \mathcal{L}_X u(t, x) + H \Delta u(t,x), \label{eqn:A2}
    \end{align}
    where $\Delta(\cdot) = \mathrm{div}_\mu(\nabla(\cdot))$ denotes the Laplace--Beltrami operator, and $H=\frac{1}{2}\sigma^2>0$.
\end{theorem}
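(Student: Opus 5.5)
We prove the two claims separately: the interior equation \eqref{eqn:dotu2} comes from It\^{o}'s formula on the manifold together with the semigroup property \eqref{eqn:semigroup}, and the guard condition \eqref{eqn:BCu2} comes from the strong Markov property applied at the instantaneous reset.

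\textbf{Interior points.} Fix $x\in\X\setminus\G$ in the interior, and let $\tau$ be the first exit time of $x_t$ from a small geodesic ball $B\subset\X\setminus\partial\X$ around $x$. For $t<\tau$ no reset and no boundary interaction occurs, so $x_t$ is the diffusion \eqref{eqn:SDE1}. For a smooth $f$, It\^{o}'s formula in Stratonovich-converted form gives
\[
df(x_t)=\Bigl(\mathcal L_Xf+\tfrac12\sigma^2\textstyle\sum_i \mathcal L_{E_i}\mathcal L_{E_i}f - (\text{It\^o correction})\Bigr)dt+\sigma\textstyle\sum_i\mathcal L_{E_i}f\,dW_i .
\]
The isotropic construction of Brownian motion cited in \cite{lee2025brownian} identifies the second-order part with $H\Delta f$. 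In a local orthonormal frame,
\[
\sum_i(\mathcal L_{E_i}\mathcal L_{E_i}f-\mathcal L_{\nabla_{E_i}E_i}f)=\Delta f,
\]
and the It\^o interpretation of \eqref{eqn:SDE1} is understood to absorb the $\nabla_{E_i}E_i$ terms. Taking expectations kills the martingale term. Since $\mathbb P[\tau\le t]=o(t)$ by standard exit-time estimates, dividing by $t$ and letting $t\downarrow0$ gives $\mathcal Af=\mathcal L_Xf+H\Delta f$ as in \eqref{eqn:A2}. Applying this to $u(t,\cdot)=\mathcal K_tf$ and using $\mathcal A\mathcal K_t=\mathcal K_t\mathcal A$ yields \eqref{eqn:u_dot_Au}, which on $\X\setminus\G$ is \eqref{eqn:dotu2}. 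At points of $\Gamma$ the equation holds in the same sense, with the boundary conditions of \Cref{def:boundary_single} supplying the domain.

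\textbf{Guard points.} Let $x\in\G$. By the reset structure in \Cref{assump:jump}, a trajectory starting at $x$ is instantaneously mapped to $\Phi(x)$, which is consistent with the guard trace condition \eqref{eqn:BCv_guard_single}. Hence $z_{0^+}=\Phi(x)$ almost surely. By the strong Markov property at the jump time $0$,
\[
\E[f(x_t)\mid x_0=x]=\E[f(x_t)\mid x_0=\Phi(x)],
\]
that is, $u(t,x)=u(t,\Phi(x))$, which is \eqref{eqn:BCu2}. The initial condition $u(0,\cdot)=f$ is immediate from \eqref{eqn:K0}.

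\textbf{Main obstacle.} The delicate step is making rigorous that the guard condition and the interior generator are compatible near $\G$. Trajectories starting close to $\G$ reach it with non-negligible probability in small time, so $u$ must be continuous up to $\G$, with boundary values inherited from $\Phi(\G)\subset\Gamma$. We would handle this through a localization argument. Starting from $x$ near $\G$, decompose at the hitting time $T_\G$:
\[
u(t,x)=\E[f(x_t);t<T_\G]+\E[u(t-T_\G,\Phi(x_{T_\G}));t\ge T_\G].
\]
This exhibits $u$ as the solution of the Dirichlet-type problem \eqref{eqn:dotu2} with boundary data given by \eqref{eqn:BCu2}. Measurability of $\Phi$, together with the assumption that $\Phi(\G)\subset\Gamma$ is disjoint from $\G$ (so resets cannot chain instantaneously, and Zeno behavior is excluded), guarantees that this recursion is well posed.
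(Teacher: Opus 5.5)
Your proposal is correct and follows the same route as the paper. The generator \eqref{eqn:A2} comes from the Brownian-motion result of \cite{lee2025brownian} plus the Lie derivative along the drift, and \eqref{eqn:BCu2} comes from the fact that a process started on $\G$ is instantaneously restarted at $\Phi(x)$; your exit-time localization and hitting-time decomposition add rigor the paper leaves implicit. One overstatement in your side remark: disjointness of $\Phi(\G)$ and $\G$ prevents direct chaining of resets but does not by itself exclude Zeno accumulation, since their closures may touch, so you would need something like a positive distance between them — though this does not affect the two claims of the theorem.
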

\begin{proof}
    According to \cite[Theorem 4]{lee2025brownian}, the generator of the diffusion part of  \eqref{eqn:SDE1} yields half of the Laplace-Beltrami operator, scaled by $\sigma^2$.
    Thus, including the advection part of \eqref{eqn:SDE1}, the complete generator is
    \begin{align*}
        \mathcal{A} u = \mathcal{L}_X u + H \Delta u,
    \end{align*}
    where $\mathcal{L}_X f = X[f]= df(X)$ denotes the Lie-derivative. 
    This yields \eqref{eqn:A2}. 

    Next, we show the boundary condition on $\G$. 
    Let $x\in\G$.
    By the definition of the hybrid system, it is instantaneously reset according to $x^+=\Phi(x)$. 
    Thus, starting from $x\in\G$, the process immediately transitions to $\Phi(x)$ and continues its evolution from that point.
    Therefore, for any $t>0$,
    \begin{align*}
        u(t,x) & = \E[f(z_t)\mid z_0=x] = \E[f(z_t)\mid z_0=\Phi(x)]\\
               & = u(t,\Phi(x)),
    \end{align*}
    which establishes \eqref{eqn:BCu2}.
\end{proof}

This shows that the stochastic Koopman operator evolves according to the same generator as the underlying diffusion in the interior of the state space, while the forced jump at the guard induces the compatibility condition \eqref{eqn:BCu2} across the reset.
Next, the stochastic Frobenius--Perron operator is obtained by duality.

\begin{theorem}\label{thm:P2}
    Consider the stochastic hybrid system satisfying \Cref{assump:flow,assump:jump}. 
    Let $g\in L^1(\X)$ be an initial density, and let $ v(t,x)=\mathcal P_t g(x)$ denote the corresponding stochastic Frobenius--Perron evolution. Then
    \begin{align}
        \frac{\partial v(t,x)}{\partial t}
        =
        \mathcal A^*v(t,x),
        \qquad x\in\X\setminus\G,
        \label{eqn:dotv2}
    \end{align}
    with initial condition $v(0,x)=g(x)$. 
    The adjoint generator $\mathcal A^*$ is given on its domain by
    \begin{align}
        \mathcal A^*v(t,x)
        =
        -\mathcal L_X v(t,x)-v(t,x)\,\div_\mu(X)+H\Delta v(t,x),
        \label{eqn:A*2}
    \end{align}
    with $H=\frac12\sigma^2$. Equivalently, in divergence form,
    \begin{align}
        \mathcal A^*v(t,x)
        =
        -\div_\mu(Y(t,x)),
        \label{eqn:A*2_div}
    \end{align}
    where $Y(t,\cdot)\in\mathfrak{X}(\X)$ is
    \begin{align}
        Y(t,x)=v(t,x)X(x)-H\nabla v(t,x).
        \label{eqn:Y2}
    \end{align}

    Moreover, adjointness of the Koopman and Frobenius--Perron generators requires the boundary compatibility conditions
    \begin{gather}
        \int_{\partial\X} u(t,x)\,(Y(t,x)\cdot N(x))\,\nu = 0,
        \label{eqn:BC_P1}\\
        \int_{\partial\X} v(t,x)H\,(\nabla u(t,x)\cdot N(x))\,\nu = 0,
        \label{eqn:BC_P2}
    \end{gather}
    for every $u(t,x)$ in the domain of the Koopman generator, namely smooth in the interior and satisfying the boundary conditions induced by \Cref{assump:jump} and \eqref{eqn:BCu2}.

    Here $N$ is the outward-pointing unit normal vector field along $\partial\X$, and
    \[
        \nu=\iota^*(i_N\mu)
    \]
    is the induced $(n-1)$-form on $\partial\X$, where $\iota:\partial\X\hookrightarrow\X$ is the inclusion.
\end{theorem}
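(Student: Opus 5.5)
The plan is to compute the formal $L^2$ adjoint of the generator $\mathcal A=\mathcal L_X+H\Delta$ from \Cref{thm:K2} with respect to the pairing \eqref{eqn:pair}, using integration by parts on the manifold with boundary. Fix $u$ in the domain of $\mathcal A$ and a sufficiently smooth $v$, and compute
\[
\pair{v,\mathcal A u}=\int_{\X} v\,\mathcal L_X u\,\mu+\int_{\X} vH\Delta u\,\mu .
\]
Two identities drive the computation.
\begin{itemize}
\item The product rule $\div_\mu(\phi Z)=\mathcal L_Z\phi+\phi\,\div_\mu Z$.
\item The divergence theorem $\int_{\X}\div_\mu(Z)\,\mu=\int_{\partial\X}(Z\cdot N)\,\nu$, which follows from Stokes' theorem applied to $i_Z\mu$, since $\iota^*(i_Z\mu)=(Z\cdot N)\nu$.
\end{itemize}

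The drift term is handled as follows. Write $v\,\mathcal L_X u=\div_\mu(uvX)-u\,\mathcal L_X v-uv\,\div_\mu X$, so that
\[
\int_{\X} v\,\mathcal L_X u\,\mu=\int_{\X} u\bigl(-\mathcal L_X v-v\,\div_\mu X\bigr)\mu+\int_{\partial\X} u\,v\,(X\cdot N)\,\nu .
\]

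The diffusion term is handled by applying Green's identity twice:
\[
\int_{\X} v\Delta u\,\mu=\int_{\X} u\Delta v\,\mu+\int_{\partial\X}\bigl(v\,\nabla u\cdot N-u\,\nabla v\cdot N\bigr)\nu .
\]

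Adding the two pieces gives $\pair{v,\mathcal A u}=\pair{\mathcal A^*v,u}+B(u,v)$, where $\mathcal A^*$ is exactly \eqref{eqn:A*2}. Expanding $-\div_\mu(vX-H\nabla v)$ with the product rule gives the divergence form \eqref{eqn:A*2_div}. The boundary remainder regroups as
\[
B(u,v)=\int_{\partial\X} u\,(Y\cdot N)\,\nu+\int_{\partial\X} vH\,(\nabla u\cdot N)\,\nu .
\]

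Adjointness $\pair{v,\mathcal A u}=\pair{\mathcal A^*v,u}$ holds for all admissible $u$ precisely when $B$ vanishes. Requiring the two boundary integrals to vanish separately gives \eqref{eqn:BC_P1}--\eqref{eqn:BC_P2}, since they involve independent boundary data of $u$ (trace and normal derivative). The evolution equation \eqref{eqn:dotv2} then follows from \eqref{eqn:v_dot_A*v} and the duality \eqref{eqn:adj_K_FP}, by differentiating $\pair{\mathcal P_t g,f}=\pair{g,\mathcal K_t f}$ at $t$ and using \eqref{eqn:u_dot_Au}. The initial condition is $\mathcal P_0=\mathrm{id}$.

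The main obstacle is checking that the boundary conditions in \Cref{assump:jump} are consistent with the requirement that $B$ vanish, rather than deriving the identity itself. I would split $\partial\X$ into $\G$, $\Gamma^{\rm ref}$ and $\Gamma^{\rm abs}$.
\begin{itemize}
\item On $\Gamma^{\rm ref}$, both integrands vanish pointwise by \eqref{eqn:BC_uv_reflecting_single}.
\item On $\Gamma^{\rm abs}$, $u=v=0$ kills both integrands.
\item On $\G$, \eqref{eqn:BCv_guard_single} kills the second integrand. The first integrand reduces to $u(t,\Phi(x))\,(Y\cdot N)(x)$ by \eqref{eqn:BCu2}. By guard--flux compatibility, this outgoing flux, pushed forward by $\Phi$, cancels the incoming flux contribution on $\Phi(\G)$ against the same values of $u$.
\end{itemize}
The theorem is stated as a requirement, so it suffices to present \eqref{eqn:BC_P1}--\eqref{eqn:BC_P2} as the necessary and sufficient conditions for the vanishing of $B$. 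This bookkeeping should be noted as a remark rather than proved in detail.
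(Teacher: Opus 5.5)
Your proposal is correct and is essentially the paper's proof: the same integration-by-parts identity with the same boundary remainder $B(u,v)$, followed by the same split of $\partial\X$ into $\G$, $\Gamma^{\rm ref}$ and $\Gamma^{\rm abs}$. The paper simply merges the drift and diffusion terms into $\langle Y,\nabla u\rangle$ before the second integration by parts, so it reaches \eqref{eqn:A*2_div} first and \eqref{eqn:A*2} second, the reverse of your order.

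One caveat, which the paper's own proof shares: in your consistency check you cannot let $u\,(Y\cdot N)$ vanish on all of $\Gamma^{\rm ref}$. Since $\Phi(\G)\subset\Gamma$, the guard term must cancel against the nonzero incoming flux on $\Phi(\G)$ (cf.\ $Y(t,a)=Y(t,b)$ in \Cref{ex:reset}), so the zero-flux condition should be imposed only on $\Gamma^{\rm ref}\setminus\Phi(\G)$.
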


\begin{proof}
    Let $u(t,x)$ be the solution of \eqref{eqn:dotu2} and \eqref{eqn:BCu2} for the stochastic Koopman operator. 
	Using the expression \eqref{eqn:A2} for the generator,
    \begin{align}
        \pair{v,\mathcal Au}
        =
        \int_\X \Bigl(v\,\mathcal L_Xu + vH\,\div_\mu(\nabla u)\Bigr)\,\mu.
        \label{eqn:pair_P2}
    \end{align}

    Applying the integration-by-parts formula \eqref{eqn:integ_part} to the second term yields
    \begin{align*}
        \int_\X vH\,\div_\mu(\nabla u)\,\mu
        =
        \int_{\partial\X} vH\,(\nabla u\cdot N)\,\nu
        -
        \int_\X \langle \nabla(vH),\nabla u\rangle\,\mu.
    \end{align*}
    Since $H=\frac12\sigma^2$ is constant, $\nabla(vH)=H\nabla v$, and therefore
    \begin{align*}
        \pair{v,\mathcal Au} &= \int_\X \Bigl(v\,\mathcal L_Xu - H\langle \nabla v,\nabla u\rangle\Bigr)\,\mu
        + \int_{\partial\X} vH\,(\nabla u\cdot N)\,\nu.
    \end{align*}
    Using $ v\,\mathcal L_Xu=\langle vX,\nabla u\rangle$, 
    we obtain
    \begin{align*}
        \pair{v,\mathcal Au} &= \int_\X \langle vX-H\nabla v,\nabla u\rangle\,\mu + \int_{\partial\X} vH\,(\nabla u\cdot N)\,\nu \\
                             &= \int_\X \langle Y,\nabla u\rangle\,\mu + \int_{\partial\X} vH\,(\nabla u\cdot N)\,\nu,
    \end{align*}
    where $Y=vX-H\nabla v$.

    Applying \eqref{eqn:integ_part} again, the first term is given by
    \begin{align}
        \int_\X \langle Y,\nabla u\rangle\,\mu
        =
        \int_{\partial\X} u\,(Y\cdot N)\,\nu
        -
        \int_\X u\,\div_\mu(Y)\,\mu.
        \label{eqn:thm_tmp0}
    \end{align}
    Hence
    \begin{align}
        \pair{v,\mathcal Au} & = \int_{\partial\X} u\,(Y\cdot N)\,\nu + \int_{\partial\X} vH\,(\nabla u\cdot N)\,\nu\nonumber\\
                             & \quad - \int_\X u\,\div_\mu(Y)\,\mu.
                             \label{eqn:thm_tmp0_revised}
    \end{align}

	To evaluate the boundary terms in \eqref{eqn:thm_tmp0_revised}, we decompose
    \[
        \partial\X=\G\cup\Gamma^{\rm ref}\cup\Gamma^{\rm abs},
    \]
	by \Cref{assump:jump}.

    First, consider the second boundary term:
    \[
        \int_{\partial\X} vH\,(\nabla u\cdot N)\,\nu =
        \int_{\G\cup \Gamma^{\rm ref} \cup \Gamma^{\rm abs} } vH\,(\nabla u\cdot N)\,\nu.
    \]
    On the guard $\G$, the trace condition \eqref{eqn:BCv_guard_single} gives $v=0$.
    On the reflecting boundary $\Gamma^{\rm ref}$, \eqref{eqn:BC_uv_reflecting_single} gives $\nabla u\cdot N=0$.
    Further, on the absorbing boundary $\Gamma^{\rm abs}$, \eqref{eqn:BC_uv_absorbing_single} gives $v=0$.
    Hence the integrand vanishes on each part of the boundary, and this yields \eqref{eqn:BC_P2}.

    Next, consider the first boundary term $\int_{\partial\X} u\,(Y\cdot N)\,\nu$.
    On the reflecting boundary $\Gamma^{\rm ref}$, \eqref{eqn:BC_uv_reflecting_single} gives $ Y\cdot N=0$, and
    on the absorbing boundary $\Gamma^{\rm abs}$, \eqref{eqn:BC_uv_absorbing_single} yields $u=0$.
	Thus the contribution from the physical boundary vanishes. 
	On the guard $\G$, the flux need not vanish pointwise. 
	However, by the reset mechanism, the mass leaving through $\G$ is exactly redistributed to $\Phi(\G)$, so that the net contribution of each guard/reset pair cancels in the weak formulation, thereby resulting in \eqref{eqn:BC_P1}.

	Substituting these into \eqref{eqn:thm_tmp0_revised}, we obtain
    \[
        \pair{v,\mathcal Au}
        =
        -\int_\X u\,\div_\mu(Y)\,\mu,
    \]
    which yields the duality with the fomulational of the adjoint in \eqref{eqn:A*2_div}.
    Using the product rule for divergence in \eqref{eqn:div_prod}, \eqref{eqn:A*2_div} is rearranged into \eqref{eqn:A*2}.
\end{proof}
This result shows that the effect of the reset appears entirely through boundary terms, while the interior evolution is governed by the same differential operator as the underlying diffusion.
The resulting boundary conditions for the Frobenius--Perron operator, given in \eqref{eqn:BC_P1} and \eqref{eqn:BC_P2}, are expressed in weak form, i.e., 
the boundary conditions are enforced through integral identities rather than pointwise constraints.
For specific boundary types and reset maps, these can be reduced to explicit pointwise conditions, as discussed in \Cref{sec:EX1D}. 

Next, we present an interpretation regarding the conservation of the total density and a specialization for deterministic flows. 
\begin{remark}[Flux]
    The adjoint equation \eqref{eqn:dotv2} can be written as
    \[
        \frac{\partial v(t,x)}{\partial t} = -\div_\mu(Y(t,x)).
    \]
    Integrating over $\X$ and applying the divergence theorem \eqref{eqn:div_thm} gives
    \begin{align}
        \frac{d}{dt}\int_\X v\,\mu = -\int_{\partial\X}(Y\cdot N)\,\nu. \label{eqn:total_density}
    \end{align}
    Thus, $Y\cdot N$ represents the outward probability flux through the boundary.
    In particular, if the admissible test function $u\equiv 1$ is used in \eqref{eqn:BC_P1}, the right-hand side of the above vanishes, indicating conservation of the total probability mass.
\end{remark}

\begin{remark}[Deterministic Flow]
	The results of \Cref{thm:K2,thm:P2} reduce to deterministic hybrid systems by setting $H\equiv0$.
    Further, assume that $\G\subset\partial\X$ and $\partial\X=\G\cup\Phi(\G)$, i.e., the boundary is composed of the guard and the image of the reset map.%
    \footnote{In contrast to the stochastic flow, the entire boundary need not be part of the guard. It is only required that the vector field points inward at $\Phi(\G)$.}
    Also assume that $X\cdot N>0$ for any $x\in\G$, and $X\cdot N<0$ for any $x\in\Phi(\G)$, implying that there is no grazing at the guard and that, after the jump, the flow returns to the interior of $\X$.
    Under these assumptions, \eqref{eqn:BC_P1} becomes
    \begin{align*}
        \int_{\partial\X} u\,(vX\cdot N)\,\nu &= \int_{\G} u\,(vX\cdot N)\,\nu\\
                                              &\quad +\int_\G (\Phi^*u)\,(\Phi^*(vX\cdot N))\,(\Phi^*\nu)=0,
    \end{align*}
    where the second equality follows from the change-of-variable formula.
    Since $u(x)=u(\Phi(x))$ on $\G$ by \eqref{eqn:BCu2}, and the above must hold for arbitrary $u$, we obtain
    \begin{align*}
        v(X\cdot N)\,\nu
        +
        (\Phi^*v)(\Phi^*(X\cdot N))(\Phi^*\nu)=0,
    \end{align*}
    for $x\in\G$.
    Hence
    \begin{align*}
        \Phi^*v
        =
        -\frac{(X\cdot N)\,\nu}{(\Phi^*(X\cdot N))(\Phi^*\nu)}\,v.
    \end{align*}
    Since $X\cdot N$ and $\Phi^*(X\cdot N)$ have opposite signs on $\G$, this becomes
    \begin{align*}
        \Phi^*v
        =
        \frac{|X\cdot N|\,\nu}{|\Phi^*(X\cdot N)|\,(\Phi^*\nu)}\,v
        =
        \frac{1}{\mathcal J_\mu^X(\Phi)}\,v,
    \end{align*}
    for $x\in\G$, which recovers the hybrid Jacobian $\mathcal J_\mu^X(\Phi)\in\Re$ introduced in~\cite{clark2023invariant}.
\end{remark}

\subsection{One-Dimensional Examples}\label{sec:EX1D}

The boundary compatibility conditions \eqref{eqn:BC_P1} and \eqref{eqn:BC_P2} are stated in weak integral form.
In this subsection, we illustrate how they specialize in several one-dimensional stochastic hybrid systems on $\X=[a,b]\subset\Re$, where $a<b$.
Our goal is not to derive boundary types from these weak conditions, but to verify that they are consistent with the reflecting, absorbing, and reset structures introduced in \Cref{assump:jump}.

For each example in this subsection, the reference measure is $\mu=dx$.
Hence the induced boundary form $\nu$ is a zero-form, namely a scalar function on $\partial\X$, with $\nu(a)=\nu(b)=1$.
The outward unit normal is $N(a)=-1$, $N(b)=1$.

From \eqref{eqn:A2} and \eqref{eqn:A*2}, the generator and its adjoint are
\begin{align}
    \mathcal A u &= X\frac{\partial u}{\partial x}+H\frac{\partial^2u}{\partial x^2}, \label{eqn:Au}\\
    \mathcal A^*v &= - X\frac{\partial v}{\partial x} - v\frac{\partial X}{\partial x} + H\frac{\partial^2v}{\partial x^2}. \label{eqn:Asv}
\end{align}
Moreover,
\[
	Y=vX-H\frac{\partial v}{\partial x}.
\]
Therefore, the weak boundary compatibility conditions \eqref{eqn:BC_P1}--\eqref{eqn:BC_P2} become
\begin{gather}
    -\,u(t,a)Y(t,a)+u(t,b)Y(t,b)=0, \label{eqn:BC_P1_ex}\\
    -\,v(t,a)H\frac{\partial u(t,a)}{\partial x} +v(t,b)H\frac{\partial u(t,b)}{\partial x}=0. \label{eqn:BC_P2_ex}
\end{gather}
These identities will be verified below for several specific boundary configurations.

\begin{example}[Reflecting Boundary]\label{ex:reflecting}
    Let $\X=[a,b]$. We take
    \[
        \G=\emptyset,\qquad \Gamma=\partial\X=\{a,b\},
    \]
    and prescribe reflecting boundary conditions on $\Gamma$.
    Thus, in the terminology of \Cref{assump:jump}, the entire boundary is reflecting:
    \[
        \Gamma^{\rm ref}=\partial\X,\qquad \Gamma^{\rm abs}=\emptyset.
    \]
    No reset map is involved in this example.

	\begin{figure}
		\centerline{
            \begin{tikzpicture}
                \footnotesize
                \draw[->] (0,0) -- (8,0) node[below] {$x$};
                \node (c) at (6.5,0) {};
                \node (a) at (1.5,0) {};
                \node (b) at ($(a)!0.5!(c)$) {};
                \draw[->] (a) to[out=50, in=130, loop, looseness=15] (a) {};
                \draw[->] (b) to[out=130, in=50, loop, looseness=15] (b) {};
                \draw (b |- 0, 0.15) -- ++(0,-0.3) node[below] {$b$};
                \draw (a |- 0,0.15) -- ++(0,-0.3) node[below] {$a$};
                \draw (c |- 0,0.15) -- ++(0,-0.3) node[below] {$c$};
                \draw[thick, myblue] (a) -- (b);
                \draw[->,myblue,thick] ($(a)!0.5!(b)$) -- ++(0.1,0);
            \end{tikzpicture}
		}
		\caption{\Cref{ex:reflecting}: the state space is $\X=[a,b]$ (shaded in blue), and both endpoints are reflecting. The vector field $X(x)=-\gamma(x-c)$ for $\gamma>0$ and $c>b$ points to the right toward $x=c$.}
        \label{fig:X_reflecting}
    \end{figure}
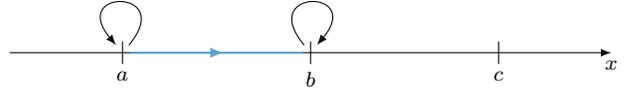

	\begin{figure}[h!]
        \centering
        \subfloat[Evolution of density $v$]{
        \includegraphics[width=0.48\columnwidth]{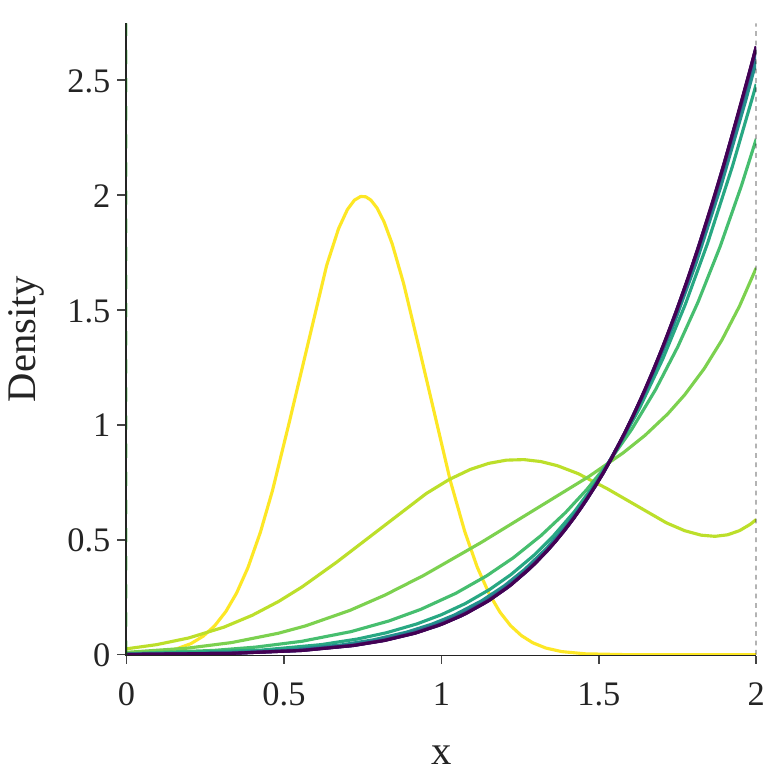}}
        \hfill
        \subfloat[Monte-Carlo simulation]{
        \includegraphics[width=0.48\columnwidth]{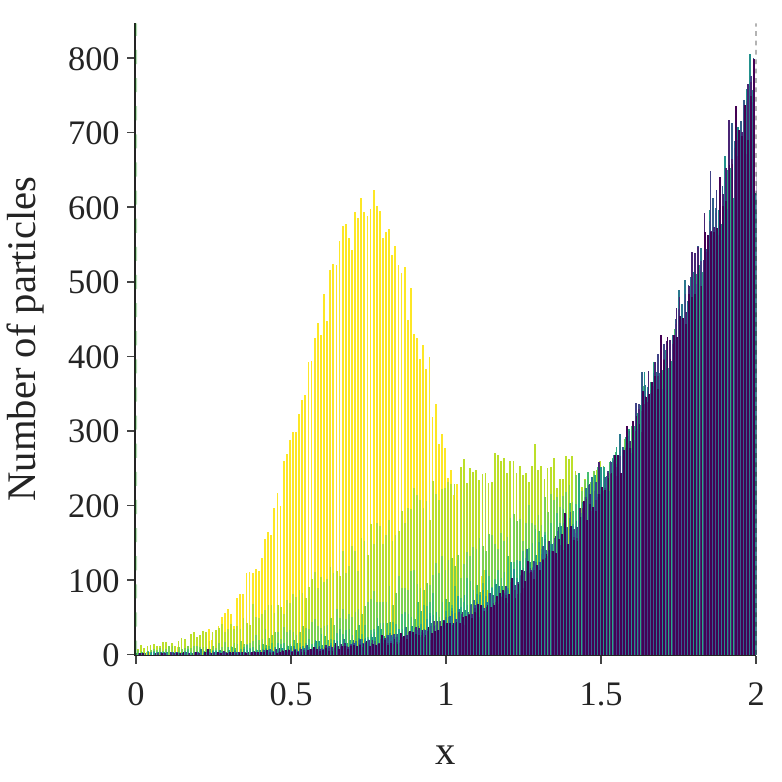}}
        \caption{\Cref{ex:reflecting}: numerical simulation of the Frobenius--Perron operator. (Throughout this section, time progression is represented by a gradual color change from yellow to blue.)}
        \label{fig:v_reflecting}
    \end{figure}
	\begin{figure}[h!]
        \centering
        \subfloat[Evolution of observable $u$]{
        \includegraphics[width=0.48\columnwidth]{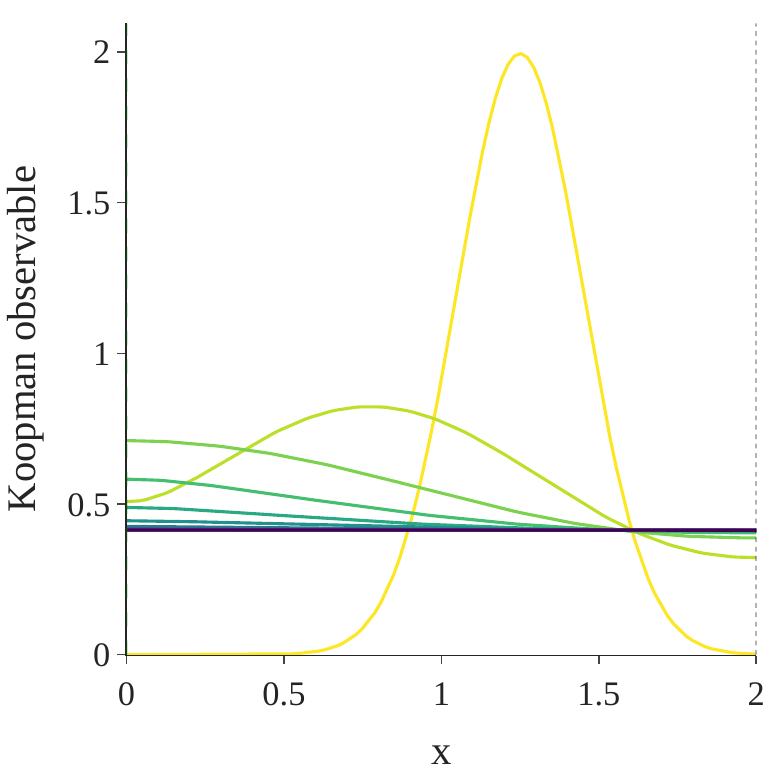}}
        \hfill
        \subfloat[Monte-Carlo simulation]{
        \includegraphics[width=0.48\columnwidth]{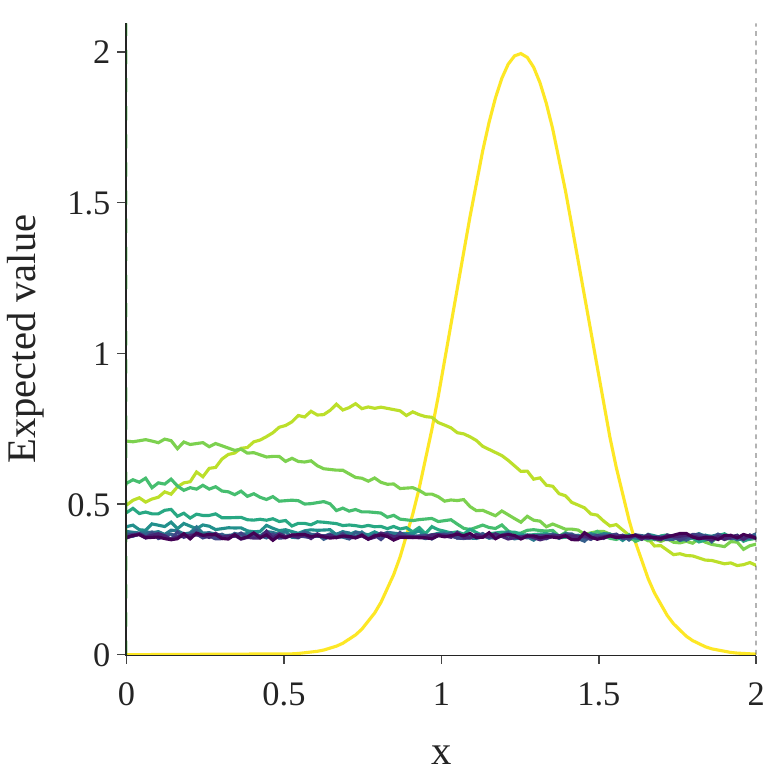}}
        \caption{\Cref{ex:reflecting}: numerical simulation of the Koopman operator.}
        \label{fig:u_reflecting}
    \end{figure}

    By definition of a reflecting boundary, the probability flux vanishes at the boundary:
    \[
        Y(t,a)=Y(t,b)=0,
    \]
    and the Koopman observable satisfies the Neumann condition
    \[
        \frac{\partial u(t,a)}{\partial x} = \frac{\partial u(t,b)}{\partial x} =0.
    \]
    Substituting these into \eqref{eqn:BC_P1_ex} and \eqref{eqn:BC_P2_ex}, we verify that the weak boundary compatibility conditions are satisfied.

    Consider a numerical example with vector field $X(x)=-\gamma(x-c)$ and noise coefficient $H=0.5$.
    We choose $a=0.0$, $b=2.0$, $c=3.0$, and $\gamma=1.0$.
    Here, the deterministic drift points toward the right, so trajectories tend to move toward the boundary at $x=b$.
    Because the boundaries are reflecting, trajectories remain in $[a,b]$ and are reflected upon reaching either endpoint.
    This is illustrated in \Cref{fig:X_reflecting}.
    Without the boundary constraint, the continuous evolution would correspond to the Ornstein--Uhlenbeck process~\cite{oksendal2003stochastic}.

    First, the stochastic Frobenius--Perron operator is simulated over a time horizon of $T=2.5$ seconds, with initial condition
    \[
        v(0,x)=\mathcal N(\mu,\sigma^2),
    \]
    where $\mu=0.75$ and $\sigma=0.20$.
    As shown in \Cref{fig:v_reflecting}(a), the density drifts rightward toward $x=b$ and accumulates near the boundary due to reflection, eventually approaching a stationary distribution after the drift is balanced with the diffusion. 
    The corresponding Monte-Carlo approximation in \Cref{fig:v_reflecting}(b) is consistent with the numerical soluation of the Frobenius--Perron operator.


    Similarly, the stochastic Koopman operator is simulated from the initial condition
    \[
        u(0,x)=\mathcal N(\mu,\sigma^2),
    \]
    with $\mu=1.25$ and $\sigma=0.20$.
    The resulting evolution is shown in \Cref{fig:u_reflecting}, together with the Monte-Carlo simulation, and the two agree well.
	
    The Koopman observable converges to a constant, illustrating the forgetting behavior of ergodic systems. 
	Regardless of the initial state, the Koopman observable converges to the same value, namely the mean of $u(0,x)$ with respect to the steady-state density.

\end{example}

\begin{example}[Absorbing Boundary]\label{ex:absorbing}
    Let $\X=[a,b]$. We take
    \[
        \G=\emptyset, \qquad \Gamma=\partial\X=\{a,b\},
    \]
    and prescribe a reflecting boundary condition at $x=a$ and an absorbing boundary condition at $x=b$.
    Thus, in the notation of \Cref{assump:jump},
    \[
        \Gamma^{\rm ref}=\{a\}, \qquad \Gamma^{\rm abs}=\{b\}.
    \]
    No reset map is involved in this example.
    Trajectories are reflected at $x=a$ and removed from the system upon reaching $x=b$.
    This behavior is illustrated at \Cref{fig:X_absorbing}.

    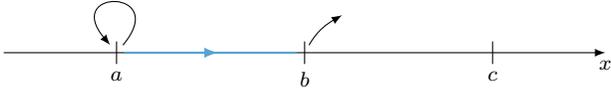
\begin{figure}
		\centerline{
            \begin{tikzpicture}
                \footnotesize
                \draw[->] (0,0) -- (8,0) node[below] {$x$};
                \node (c) at (6.5,0) {};
                \node (a) at (1.5,0) {};
                \node (b) at ($(a)!0.5!(c)$) {};
                \draw[->] (a) to[out=50, in=130, loop, looseness=15] (a) {};
                \draw[->] (b) to[out=60, in=210] ++(0.5,0.5) {};
                \draw (b |- 0, 0.15) -- ++(0,-0.3) node[below] {$b$};
                \draw (a |- 0,0.15) -- ++(0,-0.3) node[below] {$a$};
                \draw (c |- 0,0.15) -- ++(0,-0.3) node[below] {$c$};
                \draw[thick, myblue] (a) -- (b);
                \draw[->,myblue,thick] ($(a)!0.5!(b)$) -- ++(0.1,0);
            \end{tikzpicture}
		}
		\caption{\Cref{ex:absorbing}: the trajectory is reflected at $x=a$ and absorbed at $x=b$.}
        \label{fig:X_absorbing}
    \end{figure}

    By definition of an absorbing boundary, the density vanishes at $x=b$, and the Koopman observable is zero there:
    \[
        v(t,b)=0, \qquad u(t,b)=0.
    \]
    At the reflecting boundary $x=a$, the probability flux vanishes and the Koopman observable satisfies the Neumann condition:
    \begin{align}
        Y(t,a)=0, \qquad \frac{\partial u(t,a)}{\partial x}=0.
        \label{eqn:u_bc_absorbing}
    \end{align}
    Substituting these conditions into \eqref{eqn:BC_P1_ex} and \eqref{eqn:BC_P2_ex}, we verify that the weak boundary compatibility conditions are satisfied.
    In contrast to \Cref{ex:reflecting}, the total density is no longer preserved, since the outward probability flux at $x=b$ need not vanish.

    Using the same parameters and initial conditions as in \Cref{ex:reflecting}, the stochastic Frobenius--Perron operator is simulated for this mixed reflecting/absorbing boundary configuration, and the result is presented in \Cref{fig:v_absorbing}.
    Since trajectories are removed upon reaching $x=b$, the total mass of the density decreases over time, as seen from the shrinking area under the curve in \Cref{fig:v_absorbing}(a).
    The presented numerial solution is consistent with the Monte-Carlo approximation shown in \Cref{fig:v_absorbing}(b).
    The corresponding numerical simulation of the stochastic Koopman operator is shown in \Cref{fig:u_absorbing}.

    \begin{figure}
        \centering
        \subfloat[Evolution of density $v$]{\includegraphics[width=0.48\columnwidth]{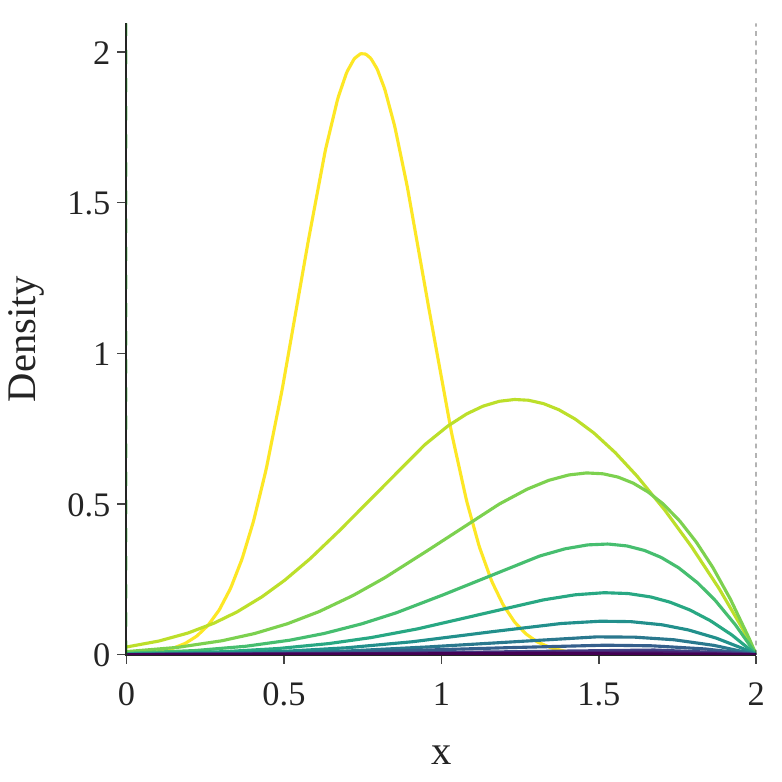}}
        \hfill
        \subfloat[Monte-Carlo simulation]{\includegraphics[width=0.48\columnwidth]{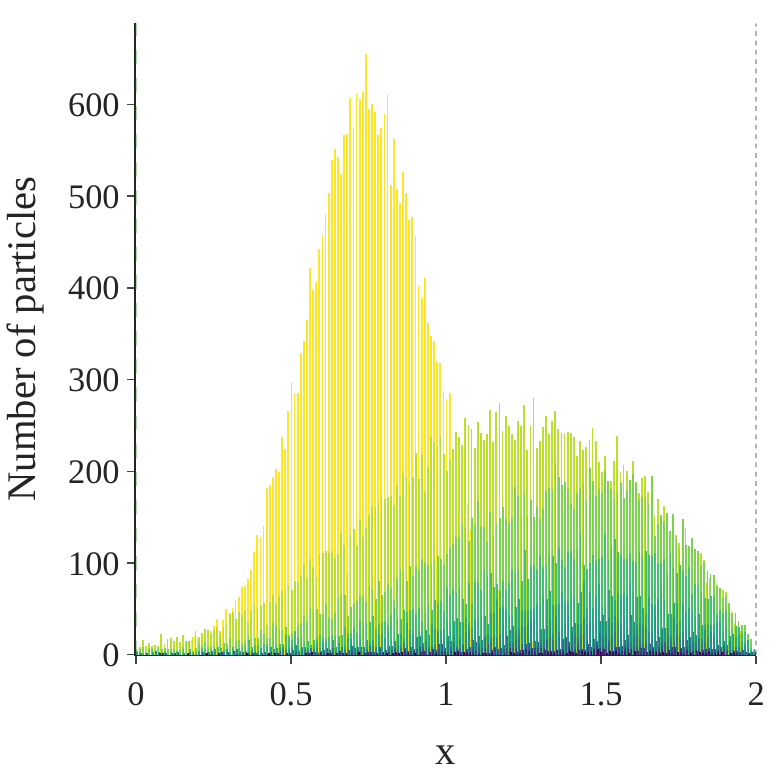}}
        \caption{\Cref{ex:absorbing}: numerical simulation of the Frobenius--Perron operator for mixed reflecting and absorbing boundaries.}
        \label{fig:v_absorbing}
    \end{figure}

    \begin{figure}
        \centering
        \subfloat[Evolution of observable $u$]{\includegraphics[width=0.48\columnwidth]{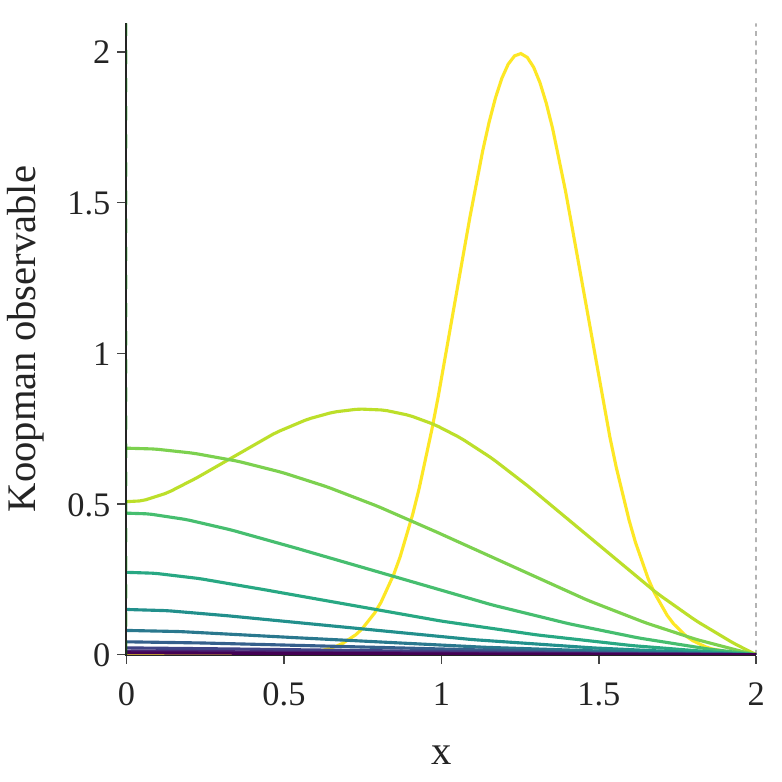}}
        \hfill
        \subfloat[Monte-Carlo simulation]{\includegraphics[width=0.48\columnwidth]{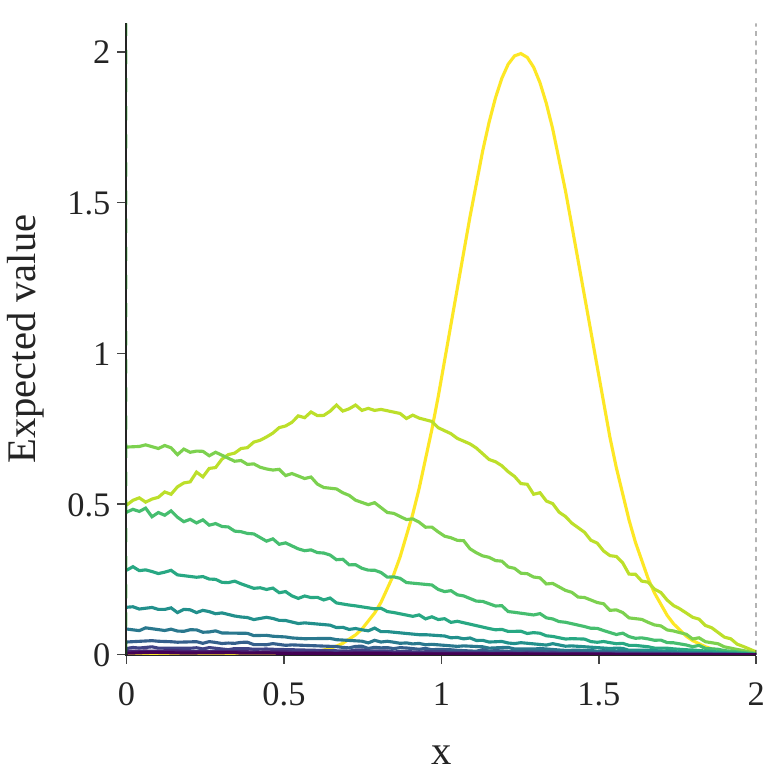}}
        \caption{\Cref{ex:absorbing}: numerical simulation of the Koopman operator for mixed reflecting and absorbing boundaries.}
        \label{fig:u_absorbing}
    \end{figure}

\end{example}

\begin{example}[Resetting Boundary]\label{ex:reset}
    Let $\X=[a,b]$. We take
    \[
        \G=\{b\}, \quad \Gamma=\{a\}, \quad \Gamma^{\rm ref}=\{a\}, \quad \Gamma^{\rm abs}=\emptyset.
    \]
    The reset map is defined by
    \[
        \Phi(b)=a,
    \]
    so that $\Phi(\G)\subset\Gamma$, consistent with \Cref{assump:jump}.
    Thus, trajectories reaching $x=b$ are instantaneously transferred to $x=a$, while $x=a$ is a reflecting boundary, as shown at \Cref{fig:X_resetting}.

    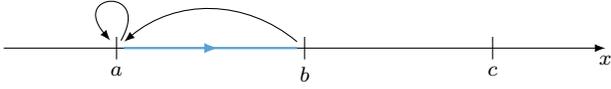
\begin{figure}
        \begin{center}
            \begin{tikzpicture}
                \footnotesize
                \draw[->] (0,0) -- (8,0) node[below] {$x$};
                \node (c) at (6.5,0) {};
                \node (a) at (1.5,0) {};
                \node (b) at ($(a)!0.5!(c)$) {};
                \draw[->] (a) to[out=60, in=130, loop, looseness=15] (a) {};
                \draw[->] (b) to[out=140, in=40] (a) {};
                \draw (b |- 0, 0.15) -- ++(0,-0.3) node[below] {$b$};
                \draw (a |- 0,0.15) -- ++(0,-0.3) node[below] {$a$};
                \draw (c |- 0,0.15) -- ++(0,-0.3) node[below] {$c$};
                \draw[thick, myblue] (a) -- (b);
                \draw[->,myblue,thick] ($(a)!0.5!(b)$) -- ++(0.1,0);
            \end{tikzpicture}
        \end{center}
		\caption{\Cref{ex:reset}: trajectories are reflected at $x=a$ and reset from $x=b$ to $x=a$.}
        \label{fig:X_resetting}
    \end{figure}

    The reset condition \eqref{eqn:BCu2} implies continuity of the Koopman observable across the jump:
    \[
        u(t,b)=u(t,a).
    \]
    At the guard $x=b$, trajectories leave the continuous domain immediately, so the interior density satisfies
    \[
        v(t,b)=0.
    \]

    At the physical boundary $x=a$, the reflecting condition imposes the Neumann boundary condition
    \begin{align}
        \frac{\partial u(t,a)}{\partial x}=0.
    \end{align}
    However, unlike a purely reflecting boundary, $x=a$ also receives probability mass from the reset $\Phi(b)=a$.
    Therefore, no separate condition $Y(t,a)=0$ is imposed.

    Substituting these conditions into \eqref{eqn:BC_P1_ex}, we obtain
    \begin{align}
        -u(t,a)Y(t,a)+u(t,b)Y(t,b)=0,
    \end{align}
    which, using $u(t,b)=u(t,a)$, reduces to
    \begin{align}
        Y(t,a)=Y(t,b).
    \end{align}
	This expresses that the outward flux through the guard at $x=b$ is exactly reinjected at its reset image $x=a$, ensuring conservation of total mass.

    Similarly, substituting into \eqref{eqn:BC_P2_ex}, we obtain
    \begin{align}
        -v(t,a)H\frac{\partial u(t,a)}{\partial x}+v(t,b)H\frac{\partial u(t,b)}{\partial x}=0,
    \end{align}
    which is satisfied since $v(t,b)=0$ and $\partial_x u(t,a)=0$.

    The numerical simulation for the Frobenius--Perron operator is shown in \Cref{fig:v_reset}.
    The density decreases as $x\to b$, reflecting the increasing likelihood of hitting the guard and being reset to $x=a$.
    Despite this redistribution, the total mass is conserved due to reinjection at $x=a$.

    The simulation results for the stochastic Koopman operator are presented in \Cref{fig:u_reset}.
    As in \Cref{ex:reflecting}, the observable converges to a constant value, and the transient response is consistent with the Monte-Carlo simulation.

    The resulting boundary conditions for three types of hybrid systems are summarized at \Cref{tab:EX1D}.

    \begin{figure}
        \centering
        \subfloat[Evolution of density $v$]{\includegraphics[width=0.48\columnwidth]{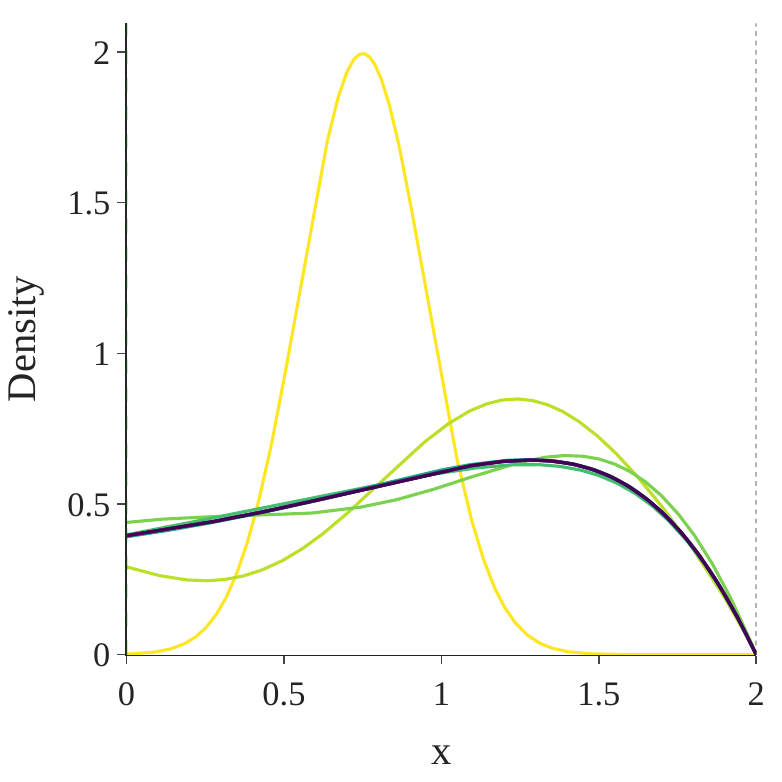}}
        \hfill
        \subfloat[Histogram of Monte-Carlo simulation]{\includegraphics[width=0.48\columnwidth]{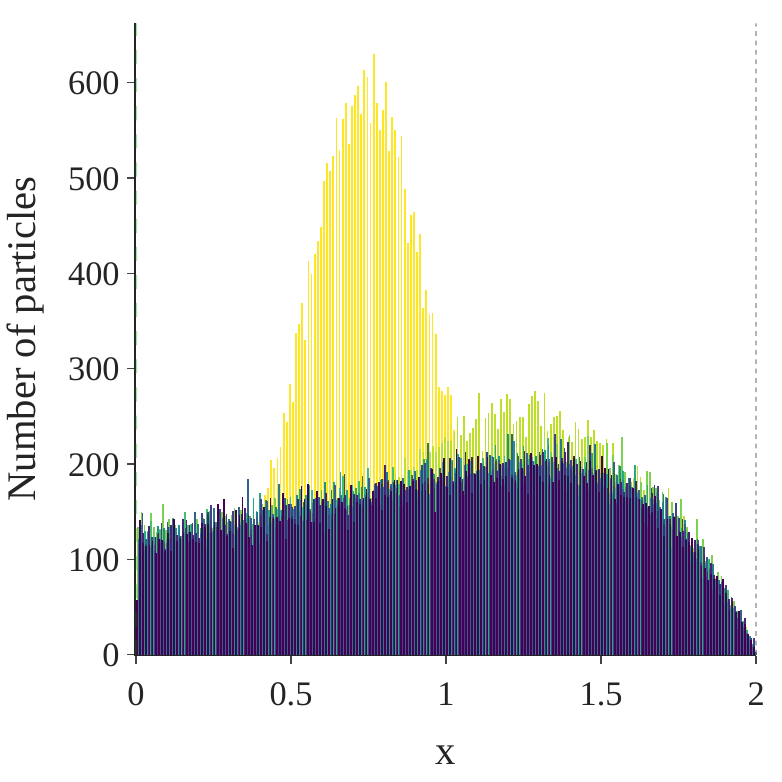}}
        \caption{\Cref{ex:reset}: numerical simulation of the Frobenius--Perron operator for resetting boundary.}
        \label{fig:v_reset}
    \end{figure}

    \begin{figure}
        \centering
        \subfloat[Evolution of observable $u$]{\includegraphics[width=0.48\columnwidth]{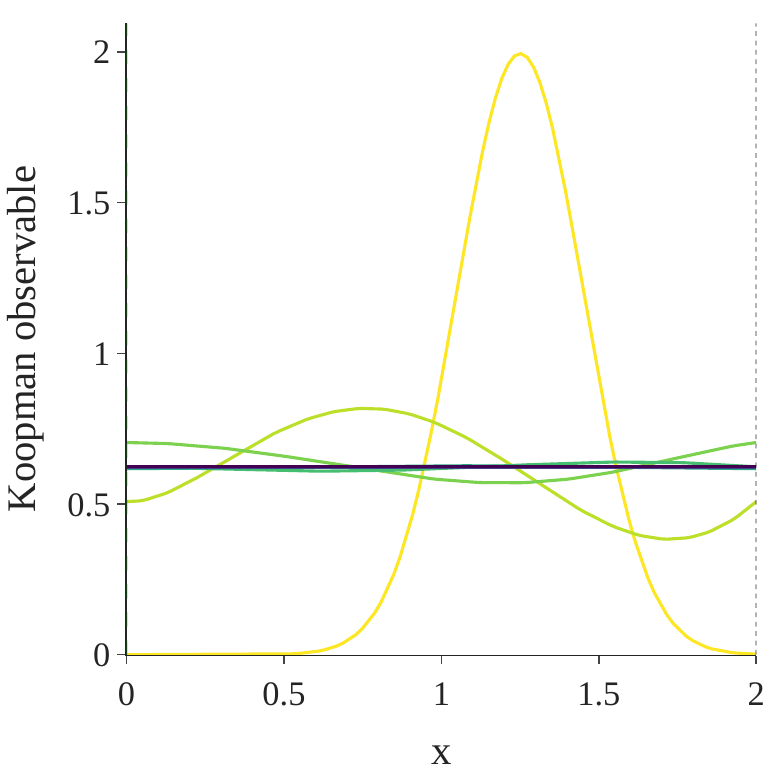}}
        \hfill
        \subfloat[Monte-Carlo simulation]{\includegraphics[width=0.48\columnwidth]{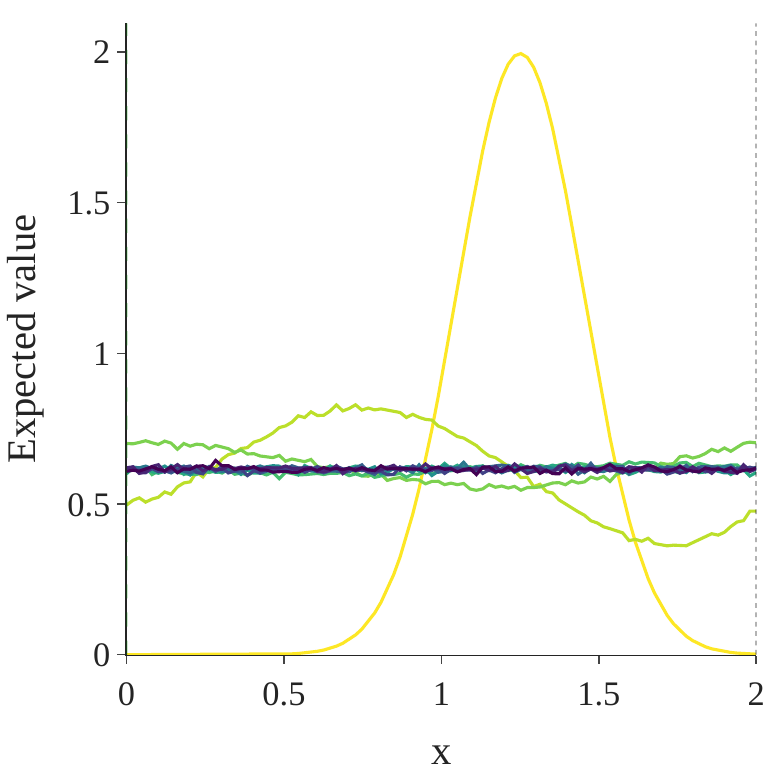}}
        \caption{\Cref{ex:reset}: numerical simulation of the Koopman operator for resetting boundary.}
        \label{fig:u_reset}
    \end{figure}

\end{example}

\begin{table}
    \caption{Summary of boundary conditions}
    \label{tab:EX1D}
    \begin{center}
        \begin{tabular}{ccc}
            \toprule
            & $x=a$ & $x=b$ \\ \midrule
            Example 1 
            & $Y(a)=0$, $\partial_x u(a)=0$ 
            & $Y(b)=0$, $\partial_x u(b)=0$ \\
            & (reflecting) & (reflecting) \\ \midrule
            Example 2 
            & $Y(a)=0$, $\partial_x u(a)=0$ 
            & $u(b)=0$, $v(b)=0$ \\
            & (reflecting) & (absorbing) \\ \midrule
            Example 3 
            & $Y(a)=Y(b)$, $\partial_x u(a)=0$ 
            & $u(b)=u(a)$, $v(b)=0$ \\
            & (reset image with reflection) & (guard) \\ \bottomrule
        \end{tabular}
    \end{center}
\end{table}

\section{Stochastic Hybrid Systems with Multiple Modes}\label{sec:multi_mode}

In this section, we extend the results of \Cref{sec:SHS1} to the general case of stochastic hybrid systems with multiple discrete modes. Each mode is associated with a distinct continuous state space, stochastic flow, and boundary behavior, while discrete transitions couple the modes through boundary-induced resets.

\subsection{Problem Formulation}

As introduced in \Cref{sec:SHS}, let $\Q=\{1,2,\ldots,N_Q\}$ be a finite set of discrete modes.
For each $q\in\Q$, the continuous state evolves on an $n_q$-dimensional Riemannian manifold $\X_q$ with boundary $\partial\X_q$ and metric $g_q$.
The multi-mode system can be viewed as a collection of single-mode systems coupled through reset maps across modes.

\begin{assumption}[Continuous Flow]\label{assump:flow_q}
For each $q\in\Q$, the continuous dynamics are given by
\begin{align}
    dx = X_q(x)\,dt + \sum_{i=1}^{n_q} \sigma_q E_{q,i}(x) dW_i,
    \label{eqn:SDEq}
\end{align}
where $X_q\in\mathfrak{X}(\X_q)$ is the drift vector field, $\sigma_q>0$, and $\{E_{q,1}(x),\ldots,E_{q,n_q}(x)\}$ is an orthonormal frame of $\X_q$.
\end{assumption}

As in the single-mode case, the diffusion is isotropic with respect to the Riemannian metric.
The boundary behavior at each mode follows the same local definitions of reflecting and absorbing boundaries given in \Cref{def:boundary_single}, for example with $Y$ replaced by $Y_q$.

\begin{assumption}[Boundary Decomposition and Reset Structure]\label{assump:jump_q}
For each $q\in\Q$, the boundary is decomposed as
\[
    \partial \X_q = \G_q \cup \Gamma_q, 
    \qquad 
    \G_q\cap\Gamma_q=\emptyset.
\]
Furthermore:

\begin{itemize}

    \item \textbf{Reset on the guard.}
    Upon intersecting with $\G_q$, the hybrid state resets according to
    \begin{align}
        (x^+,q^+) = \Phi(x,q), \qquad x\in\G_q,
        \label{eqn:reset_q}
    \end{align}
    where $\Phi(\G_q)\subset \Gamma_{q^+}\times\{q^+\}$.

    \item \textbf{Guard trace condition.}
    As in \Cref{assump:jump}, trajectories do not remain on $\G_q$, and the density satisfies
    \begin{align}
        v(t,x,q)=0, \qquad x\in\G_q.
        \label{eqn:BCv_guard}
    \end{align}

    \item \textbf{Guard--flux compatibility.}
    Probability mass leaving mode $q$ through $\G_q$ is transferred by the reset map to $\Phi(\G_q)\subset \Gamma_{q^+}$ without accumulation or loss.

    \item \textbf{Physical boundary on $\Gamma_q$.}
    The set $\Gamma_q$ is partitioned as
    \[
        \Gamma_q = \Gamma_q^{\rm ref}\cup \Gamma_q^{\rm abs}, 
        \qquad
        \Gamma_q^{\rm ref}\cap\Gamma_q^{\rm abs}=\emptyset,
    \]
    where the reflecting and absorbing boundary conditions are imposed as in \Cref{def:boundary_single}.

\end{itemize}
\end{assumption}
Each mode inherits the local boundary behavior defined in \Cref{sec:SHS1}, 
while the reset map couples the boundary fluxes across different modes.

\subsection{Stochastic Koopman/Frobenius--Perron Operator}

Next, we develop the stochastic Koopman operator and the Frobenius--Perron operator.

\begin{theorem}\label{thm:Kq}
    Consider the stochastic hybrid system satisfying \Cref{assump:flow_q,assump:jump_q}. 
    Let
    \[
        u(t,x,q)=\mathcal K_t f(x,q),
    \]
    where $f:\H\to\Re$ belongs to the domain of the generator. Then the stochastic Koopman observable satisfies
    \begin{align}
        \frac{\partial u(t,x,q)}{\partial t} &=\mathcal A_q u(t,x,q), \qquad x\in\X_q\setminus \G_q,\label{eqn:dotuq}\\
        u(t,x,q) &=u(t,\Phi(x,q)), \qquad x\in\G_q,\label{eqn:BCuq}
    \end{align}
    with the initial condition
    \[
        u(0,x,q)=f(x,q).
    \]
    The infinitesimal generator $\mathcal A_q$ is given on its domain by
    \begin{align}
        \mathcal A_q u(t,x,q) = \mathcal L_{X_q}u(t,x,q)+H_q\Delta_q u(t,x,q), \label{eqn:Aq}
    \end{align}
    where $\Delta_q$ is the Laplace--Beltrami operator on $\X_q$ with metric $g_q$, and $H_q=\frac12\sigma_q^2>0$.
\end{theorem}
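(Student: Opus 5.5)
The plan mirrors the proof of \Cref{thm:K2}, working mode by mode and using the disjoint-union structure of $\H$ to decouple the interior dynamics. There are two parts: identify the generator in the interior of each $\X_q$, and derive the reset compatibility condition on each guard $\G_q$.

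\textbf{Generator in the interior.} Fix $q\in\Q$ and a point $x$ in the interior of $\X_q$, or more generally $x\in\X_q\setminus\G_q$ with the local physical boundary behaviour of $\Gamma_q$. Because $\G_q$ is closed and disjoint from $x$, for sufficiently small $t$ the process started at $(x,q)$ stays in mode $q$ with probability $1-o(t)$. The probability of reaching the guard within time $t$ from a positive distance is exponentially small in $1/t$ for a diffusion with bounded coefficients. Hence
\[
\lim_{t\downarrow 0}\frac{\mathcal K_tf(x,q)-f(x,q)}{t}
\]
coincides with the generator of the single-mode SDE \eqref{eqn:SDEq} on $\X_q$. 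By \cite[Theorem 4]{lee2025brownian}, the diffusion part with orthonormal frame $\{E_{q,i}\}$ and strength $\sigma_q$ generates $\frac12\sigma_q^2\Delta_q$, with $\Delta_q$ taken for the metric $g_q$. Adding the drift contributes $\mathcal L_{X_q}$, which gives \eqref{eqn:Aq}. Since $\mathcal A$ commutes with $\mathcal K_t$, applying \eqref{eqn:u_dot_Au} to $u=\mathcal K_tf$ and restricting to $\X_q\setminus\G_q$ yields \eqref{eqn:dotuq}.

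\textbf{Reset condition on the guard.} Take $x\in\G_q$. By \Cref{assump:jump_q}, the process started at $(x,q)$ jumps instantaneously to $(x^+,q^+)=\Phi(x,q)$. Here $x^+\in\Gamma_{q^+}$, so it is not on a guard and no further instantaneous jump cascades. Applying the strong Markov property at the jump time $\tau=0$ gives
\[
u(t,x,q)=\E[f(z_t)\mid z_0=(x,q)]=\E[f(z_t)\mid z_0=\Phi(x,q)]=u(t,\Phi(x,q)),
\]
which is \eqref{eqn:BCuq}. The initial condition $u(0,\cdot)=f$ holds trivially from the definition of $\mathcal K_0$.

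\textbf{Main obstacle.} The delicate step is justifying that the guard does not affect the generator in the interior. This is the localization argument: small-time exit probabilities are $o(t)$ uniformly on compacts away from $\G_q$. Near $\G_q$, one must also make sense of the generator through the boundary condition rather than pointwise. I would handle this as in \Cref{thm:K2}. The generator is stated only on the domain of functions satisfying \eqref{eqn:BCuq} together with the reflecting and absorbing conditions on $\Gamma_q$, and the jump is encoded solely through that domain restriction. Consequently, no new analysis beyond the single-mode case is needed, apart from bookkeeping of the mode index $q^+$ in the reset.
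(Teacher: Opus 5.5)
Your proposal is correct and follows essentially the same route as the paper. The paper likewise obtains \eqref{eqn:dotuq}--\eqref{eqn:Aq} in the interior of each $\X_q$ by applying the single-mode result of \Cref{thm:K2}, namely the generator $\frac12\sigma_q^2\Delta_q$ from \cite[Theorem 4]{lee2025brownian} plus the drift term $\mathcal L_{X_q}$, and it obtains \eqref{eqn:BCuq} because a trajectory starting on $\G_q$ is instantaneously mapped to $\Phi(x,q)$ and thereafter evolves as one started there. Your small-time localization estimate and your remark that the reset image lies in $\Gamma_{q^+}$ (so jumps cannot cascade) are useful details the paper leaves implicit, but the extension to points of $\Gamma_q$ needs $x$ to be at positive distance from $\G_q$, which is automatic only at interior points unless $\G_q$ is closed.
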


\begin{proof}
    In the interior of $\X_q$, the result follows directly from \Cref{thm:K2}, which yields \eqref{eqn:dotuq} and \eqref{eqn:Aq}. 
    Next, \eqref{eqn:BCuq} follows from the reset rule \eqref{eqn:reset_q}: if a trajectory reaches a point $(x,q)\in\G_q$, then it is instantaneously mapped to $\Phi(x,q)$, and hence the subsequent evolution coincides with that initiated from $\Phi(x,q)$. Therefore the corresponding Koopman observables agree, which gives \eqref{eqn:BCuq}.
\end{proof}

Let $v(t,x,q)\in L^1(\X_q)$ denote the mode-conditioned density, satisfying
\begin{equation}    
    \sum_{q\in\Q}\int_{\X_q} v(t,x,q)\,d\mu_q(x)=1. \label{eqn:vq_normalization}
\end{equation}
The pairing, or equivalently the duality, between observables and densities is defined by \eqref{eqn:pair}. 
Similar to \Cref{thm:P2}, the generator of the Frobenius--Perron operator for multiple modes is obtained as follows.

\begin{theorem}\label{thm:Pq}
    Consider the stochastic hybrid system satisfying \Cref{assump:flow_q,assump:jump_q}. 
    Let $g:\H\to\Re$ be an initial density and define
    \[
        v(t,x,q)=\mathcal P_t g(x,q),
    \]
    where $v(t,x,q)\in L^1(\X_q)$ satisfies \eqref{eqn:vq_normalization}.
    Then the stochastic Frobenius--Perron evolution satisfies
    \begin{align}
        \frac{\partial v(t,x,q)}{\partial t}
        =
        \mathcal A_q^* v(t,x,q),
        \qquad x\in \X_q\setminus \G_q,
        \label{eqn:dotvq}
    \end{align}
    with initial condition
    \[
        v(0,x,q)=g(x,q).
    \]

    The adjoint generator $\mathcal A_q^*$ is given on its domain by
    \begin{align}
        \mathcal A_q^* v(t,x,q) &= -\mathcal L_{X_q}v(t,x,q) - v(t,x,q)\,\mathrm{div}_{\mu_q}(X_q)\nonumber\\
                                & \quad +H_q\Delta_q v(t,x,q), \label{eqn:A*q}
    \end{align}
    where $H_q=\frac12\sigma_q^2$. 
    Equivalently,
    \begin{align}
        \mathcal A_q^* v(t,x,q)
        =
        -\div_{\mu_q}(Y_q(t,x)),
        \label{eqn:A*q_div}
    \end{align}
    where
    \begin{align}
        Y_q(t,x)
        =
        v(t,x,q)X_q(x)-H_q\nabla v(t,x,q).
        \label{eqn:Yq}
    \end{align}

    Moreover, adjointness of the Koopman and Frobenius--Perron generators requires the boundary compatibility conditions
    \begin{gather}
        \sum_{q\in\Q}\int_{\partial \X_q}
        u(t,x,q)\,(Y_q(t,x)\cdot N_q(x))\,\nu_q
        =0,
        \label{eqn:BC_P1q}\\
        \sum_{q\in\Q}\int_{\partial \X_q}
        v(t,x,q)H_q(\nabla u(t,x,q)\cdot N_q(x))\,\nu_q
        =0,
        \label{eqn:BC_P2q}
    \end{gather}
    for every $u(t,x,q)$ in the domain of the Koopman generator, namely smooth in the interior and satisfying the admissibility conditions induced by \Cref{assump:jump_q} and \eqref{eqn:BCuq}.

    Here $N_q$ denotes the outward unit normal vector along $\partial\X_q$, and
    \[
        \nu_q=\iota_q^*(i_{N_q}\mu_q)
    \]
    is the induced $(n_q-1)$-form on $\partial\X_q$, where $\iota_q:\partial\X_q\hookrightarrow\X_q$ is the inclusion.
\end{theorem}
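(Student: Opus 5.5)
The plan is to mirror the proof of \Cref{thm:P2} mode by mode, and then sum over $q$ using the hybrid pairing \eqref{eqn:pair}. Let $u$ be an admissible Koopman observable satisfying \eqref{eqn:dotuq}--\eqref{eqn:BCuq}. I will start from
\[
\pair{v,\mathcal A u}=\sum_{q\in\Q}\int_{\X_q} v\bigl(\mathcal L_{X_q}u+H_q\,\div_{\mu_q}(\nabla u)\bigr)\,\mu_q ,
\]
using \eqref{eqn:Aq}. For each fixed $q$, the calculation is identical to the single-mode case, since $H_q$ is constant on $\X_q$. One integration by parts on the Laplacian term gives the boundary integral $\int_{\partial\X_q} vH_q(\nabla u\cdot N_q)\,\nu_q$. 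Writing $v\,\mathcal L_{X_q}u=\langle vX_q,\nabla u\rangle$ and integrating by parts once more gives $\int_{\partial\X_q}u\,(Y_q\cdot N_q)\,\nu_q-\int_{\X_q}u\,\div_{\mu_q}(Y_q)\,\mu_q$, with $Y_q$ as in \eqref{eqn:Yq}. Summing over $q$ yields the analogue of \eqref{eqn:thm_tmp0_revised}: the interior term $-\sum_q\int_{\X_q}u\,\div_{\mu_q}(Y_q)\,\mu_q$ plus exactly the two boundary sums in \eqref{eqn:BC_P1q} and \eqref{eqn:BC_P2q}.

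To obtain the adjoint, I require $\pair{v,\mathcal A u}=\pair{\mathcal A^* v,u}$ for every admissible $u$. This forces both boundary sums to vanish, which are the stated compatibility conditions, and it identifies $\mathcal A_q^* v=-\div_{\mu_q}(Y_q)$ on each $\X_q\setminus\G_q$, i.e. \eqref{eqn:A*q_div}. Expanding with the product rule \eqref{eqn:div_prod},
\[
\div_{\mu_q}(vX_q)=\mathcal L_{X_q}v+v\,\div_{\mu_q}X_q ,
\]
together with $\div_{\mu_q}(\nabla v)=\Delta_q v$, gives \eqref{eqn:A*q}. The evolution equation \eqref{eqn:dotvq} then follows from \eqref{eqn:v_dot_A*v}, since $\mathcal P_t$ is the dual semigroup.

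Next I will check that these conditions are consistent with \Cref{assump:jump_q} by splitting each $\partial\X_q=\G_q\cup\Gamma_q^{\rm ref}\cup\Gamma_q^{\rm abs}$.

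\textbf{Second sum \eqref{eqn:BC_P2q}.} It vanishes term by term:
\begin{itemize}
\item on $\G_q$, $v=0$ by \eqref{eqn:BCv_guard};
\item on $\Gamma_q^{\rm ref}$, $\nabla u\cdot N_q=0$;
\item on $\Gamma_q^{\rm abs}$, $v=0$.
\end{itemize}

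\textbf{First sum \eqref{eqn:BC_P1q}.} The physical boundary contributes nothing, since $Y_q\cdot N_q=0$ on $\Gamma_q^{\rm ref}$ away from reset images and $u=0$ on $\Gamma_q^{\rm abs}$. What remains are the guard integrals over $\G_q$ and the reinjection integrals over $\Phi(\G_{q'})\subset\Gamma_q$.

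The main obstacle is this cross-mode cancellation, because the guard of mode $q$ is paired with a reset image that generally lies in a different mode $q^+$. To handle it I will pull the integral over $\Phi(\G_q)$ back to $\G_q$ by the change of variables under $\Phi$, as in the deterministic-flow remark. I will then use \eqref{eqn:BCuq}, $u(t,x,q)=u(t,\Phi(x,q))$, to factor out the common value of $u$. The remaining integrand is the sum of the outgoing flux form $(Y_q\cdot N_q)\nu_q$ on $\G_q$ and the pullback $\Phi^*\bigl((Y_{q^+}\cdot N_{q^+})\nu_{q^+}\bigr)$ of the incoming flux. The guard--flux compatibility in \Cref{assump:jump_q} states that this sum vanishes.

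Summing over all guard/reset pairs, which is a finite reindexing because $\Q$ is finite, the sum cancels globally. This gives \eqref{eqn:BC_P1q}. Taking $u\equiv1$ then recovers conservation of \eqref{eqn:vq_normalization} when no absorbing boundary is present.
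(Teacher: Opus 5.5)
Your proposal is correct and follows essentially the same route as the paper: integration by parts in each mode (the paper simply invokes \Cref{thm:P2} mode by mode), summation over $q$, and a boundary-by-boundary check in which the second sum vanishes termwise while the first vanishes by guard--flux compatibility. Your explicit pullback of the reinjection integral along $\Phi$ combined with \eqref{eqn:BCuq}, and your restriction of $Y_q\cdot N_q=0$ to reflecting points away from reset images, are more careful versions of what the paper asserts in a single line.
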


\begin{proof}
    For each $q\in\Q$, let
    \[
        u_q(t,x)=u(t,x,q), \qquad v_q(t,x)=v(t,x,q).
    \]
    Applying the results of \Cref{thm:P2} to each mode and summing over $q\in\Q$ yields
    \begin{align}
        \sum_{q\in\Q}\pair{v_q,\mathcal A_q u_q} &= \sum_{q\in\Q}\pair{\mathcal A_q^*v_q,u_q} 
        +\sum_{q\in\Q}\int_{\partial\X_q} u_q(Y_q\cdot N_q)\,\nu_q\nonumber\\
                                                 & \quad +\sum_{q\in\Q}\int_{\partial\X_q} v_qH_q(\nabla u_q\cdot N_q)\,\nu_q, \label{eqn:thm_tmp1}
    \end{align}
    with \eqref{eqn:A*q}.

    Now, we focus on the last two integrals on the boundary $\partial \X_q$. 
    First, consider the last integral $\int_{\partial\X_q} v_qH_q(\nabla u_q\cdot N_q)\,\nu_q$.
    By \Cref{assump:jump_q}, $\partial\X_q=\G_q\cup\Gamma_q^{\rm ref}\cup\Gamma_q^{\rm abs}$.
    On $\G_q$, \eqref{eqn:BCv_guard} gives $v_q=0$.
    On $\Gamma_q^{\rm ref}$, \eqref{eqn:BC_uv_reflecting_single} gives $\nabla u_q\cdot N_q=0$.
    On $\Gamma_q^{\rm abs}$, \eqref{eqn:BC_uv_absorbing_single} gives $v_q=0$.
    Hence the integrand vanishes on each part of $\partial\X_q$, showing \eqref{eqn:BC_P2q}.

    Next, consider $\int_{\partial\X_q} u_q(Y_q\cdot N_q)\,\nu_q$.
    On $\Gamma_q^{\rm ref}$, \eqref{eqn:BC_uv_reflecting_single} gives $Y_q\cdot N_q=0$.
    On $\Gamma_q^{\rm abs}$, \eqref{eqn:BC_uv_absorbing_single} gives $u_q=0$.
    Thus the contribution from the physical boundary vanishes.
    The remaining guard contribution need not vanish pointwise; however, by the guard--flux compatibility in \Cref{assump:jump_q}, the outgoing flux through each guard is balanced by the corresponding incoming flux on the reset image, so the net contribution of all guard/reset-image pairs is zero. Therefore
    \[
        \sum_{q\in\Q}\int_{\partial\X_q} u_q(Y_q\cdot N_q)\,\nu_q=0,
    \]
    which yields \eqref{eqn:BC_P1q}.

    Substituting these two identities into \eqref{eqn:thm_tmp1} gives
    \[
        \sum_{q\in\Q}\pair{v_q,\mathcal A_q u_q}
        =
        \sum_{q\in\Q}\pair{\mathcal A_q^*v_q,u_q}.
    \]
    Since this holds for every admissible Koopman observable $u$, the density satisfies
    \[
        \frac{\partial v(t,x,q)}{\partial t}=\mathcal A_q^*v(t,x,q),
        \qquad x\in\X_q\setminus\G_q,
    \]
    with initial condition $v(0,x,q)=g(x,q)$. This yields \eqref{eqn:dotvq}.
\end{proof}

Next, we present a stochastic hybrid system with multiple modes, namely a hybrid system on a torus in \Cref{sec:ex_Torus}.

\subsection{Example: Torus}\label{sec:ex_Torus}

Consider a stochastic process on the torus $\mathbb{T}=\Sph^1\times\Sph^1$, where $\Sph^1=\{q\in\Re^2\mid \|q\|=1\}$ denotes the unit circle.
We regard $\mathbb{T}$ as a two-dimensional manifold embedded in $\Re^3$, and parameterize a point $x\in\mathbb{T}$ by
\begin{align}
    x(\theta, \psi)
    = \begin{bmatrix}
        (R + r\cos\theta)\cos\psi\\
        (R + r\cos\theta)\sin\psi\\
        r\sin\theta
    \end{bmatrix},
\end{align}
where $\theta,\psi\in[0,2\pi)$ and $R, r>0$ are the major radius and the minor radius.
Here, $\psi$ corresponds to the rotation angle about the vertical axis at the center of the torus, and $\theta$ is the rotation angle within the smaller circle obtained by fixing $\psi$. 
Throughout this section, it is considered that any function defined on $\mathbb{T}$ is periodic when represented using the coordinates $\theta,\psi$.
For example, given $f:\mathbb{T}\rightarrow\Re$, we have $f(\theta,\psi)=f(\theta + 2\pi m, \psi + 2\pi n )$ for any integer $m,n$.

\paragraph{Geometry of Torus}
The metric induced from $\Re^3$ is
\[
    g_{\theta\theta}=r^2,\quad
    g_{\theta\psi}=g_{\psi\theta}=0,\quad
    g_{\psi\psi}=(R+r\cos\theta)^2,
\]
and the volume form is $\mu = r(R+r \cos\theta) d\theta\wedge d\psi$.
The orthonormal frame is
\begin{align*}
    E_\theta = \frac{1}{r}\partial_\theta, \quad
    E_\psi = \frac{1}{R+r\cos\theta}\partial_\psi.
\end{align*}
For $f\in C^1(\mathbb{T})$ and $X = X_\theta \partial_\theta + X_\psi\partial_\psi \in \mathfrak{X}(\mathbb{T})$, the gradient, divergence, and Laplace-Beltrami operator on $\mathbb{T}$ are given by
\begin{align}
    \nabla f & =\frac{1}{r^2} \frac{\partial f }{\partial\theta} \partial_\theta + \frac{1}{(R+r\cos\theta)^2} \deriv{f}{\psi}\partial_\psi,\label{eqn:grad_torus}\\
    \mathrm{div} X & = \deriv{X_\theta}{\theta} + \deriv{X_\psi}{\psi} - \frac{r\sin\theta}{R + r\cos\theta} X_\theta,\label{eqn:div_torus}\\
   \Delta f &= 
     \frac{1}{r^{2}}\,\frac{\partial^{2} f}{\partial\theta^{2}}
    + \frac{1}{(R+r\cos\theta)^{2}}\,\frac{\partial^{2} f}{\partial\psi^{2}}
    - \frac{\sin\theta}{r(R+r\cos\theta)}\frac{\partial f}{\partial\theta}. \label{eqn:laplace_torus}
\end{align}

\paragraph{Stochastic Hybrid Systems}

Here we formulate a stochastic hybrid system on the torus where the jump occurs at the one-dimensional surface defined by $\psi=\pi$. 
Since the jump is triggered on the interior hypersurface $\psi=\pi$, this example does not directly satisfy the standing assumption that the guard lies on the boundary of the continuous state space.
To address this, we decompose $\mathbb{T}$ into two parts:
\[
    \X_1=\{(\theta,\psi)\mid \psi\in[0,\pi)\},\quad
    \X_2=\{(\theta,\psi)\mid \psi\in[\pi,2\pi)\}.
\]
with artificial boundary components, as shown at \Cref{fig:torus_decomp}. 
As such there are two discrete modes $\Q=\{1,2\}$.
Throughout this section, the subscript 1, 2, or $q$ of a variable indicates that the variable is associated with the specified mode. 

For both modes $q=1,2$, let the drift vector field be $X(\theta, \psi, q) = X_\theta(\theta, \psi) \partial_\theta + X_\psi(\theta, \psi) \partial_\psi$ for $X_\theta,X_\psi\in C^\infty(\mathbb{T})$. 
According to \Cref{assump:flow_q}, the diffusion vector fields are chosen as the orthonormal frame so that the drift-free part becomes Brownian motion on the torus.
The resulting continuous flow for $q\in\{1,2\}$ is described by
\begin{align}
    d\theta & = X_\theta(\theta,\psi) dt  + \sigma\frac{1}{r} dW_\theta,\\
    d\psi & = X_\psi(\theta,\psi) dt + \sigma\frac{1}{R+ r\cos\theta} dW_\psi,
\end{align}
for $\sigma > 0$.

\begin{figure}
    \footnotesize\selectfont
    \begin{center}
        \begin{tikzpicture}

            \def\r{1.7}
            \def\shift{1.4} 

            \coordinate (leftT)  at (-\r,0);
            \coordinate (rightT) at (\r,0);

            \draw (leftT)
                arc[start angle=180, end angle=0, radius=\r]
                node[pos=0.5, below=3pt] {$\X_1$};

            \fill (rightT) circle (2pt);
            \draw[fill=white] (leftT) circle (2pt);

            \node[right=4pt] at (rightT) {$\Sigma_1^+$};
            \node[left=4pt] at (rightT) {$\psi=0^+$};
            \node[left=4pt] at (leftT)  {$\Sigma_1^-$};
            \node[right=4pt] at (leftT)  {$\psi=\pi^-$};

            \coordinate (leftB)  at (-\r,-\shift);
            \coordinate (rightB) at (\r,-\shift);

            \draw (rightB)
                arc[start angle=0, end angle=-180, radius=\r]
                node[pos=0.5, above=3pt] {$\X_2$};

            \fill (leftB) circle (2pt);
            \draw[fill=white] (rightB) circle (2pt);

            \node[right=4pt] at (rightB) {$\Sigma_2^-$};
            \node[left=4pt] at (rightB) {$\psi=2\pi^-$};
            \node[left=4pt] at (leftB)  {$\Sigma_2^+$};
            \node[right=4pt] at (leftB)  {$\psi=\pi^+$};

            \draw[->, dashed, shorten <=3pt, shorten >=3pt]
                (leftT) to[out=-30, in=-150]
                node[midway, above=1pt] {$\Phi_1^-$}
                (rightT);
            \draw[->, dashed, shorten <=3pt, shorten >=3pt]
                (leftB) to[out=30, in=-150]
                node[midway, below=1pt] {$\Phi_2^+$}
                (rightT);
            \draw[<->, double, shorten <=2pt, shorten >=2pt]
                (rightT)  --
                node[midway, right=0pt] {$\Sigma_1^+\sim\Sigma_2^-$}
                (rightB);
        \end{tikzpicture}
    \end{center}
	\caption{Decomposition of the torus (solid), with periodic identification between $\Sigma_1^+$ and $\Sigma_2^-$ (double) and reset maps at $\Sigma_1^-$ and $\Sigma_2^+$ (dashed); only the $\psi$-component is shown.}\label{fig:torus_decomp}

\end{figure}
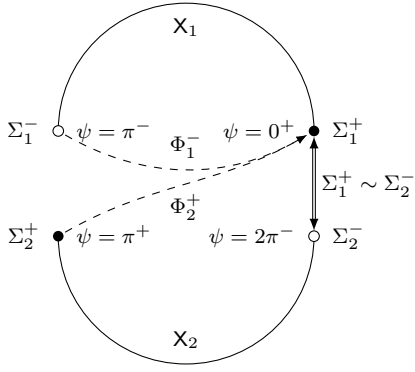
To express the interface conditions, we use one-sided traces:
\begin{align}
    \Sigma_1^+ &= \{(\theta,\psi)\in\partial\X_1 \mid \psi=0^{+}\},\\
    \Sigma_1^- &= \{(\theta,\psi)\in\partial\X_1 \mid \psi=\pi^{-}\},\\
    \Sigma_2^+ &= \{(\theta,\psi)\in\partial\X_2 \mid \psi=\pi^{+}\},\\
    \Sigma_2^- &= \{(\theta,\psi)\in\partial\X_2 \mid \psi=2\pi^{-}\}.
\end{align}
The boundary components are classified as:
\begin{itemize}
    \item Identification interface: $\Sigma_1^+ \sim \Sigma_2^-$ (periodicity)
    \item Guards: $\G_1=\Sigma_1^-$,\quad $\G_2=\Sigma_2^+$
\end{itemize}
For the two-sided jump at the guard $\psi=\pi$, define the reset map as
$\Phi_1^-:\Sigma_1^-\to\Sigma_1^+$ and $\Phi_2^+:\Sigma_2^+\to\Sigma_1^+$ by
\begin{align}
    \Phi_1^-(\theta,\pi^{-},1) &= (\theta+\pi,0^{+},1),\\
    \Phi_2^+(\theta,\pi^{+},2) &= (\theta+\pi,0^{+},1), \label{eq:phi_pi_twosided}
\end{align}
i.e., the hybrid state jumps from $\psi=\pi$ to $\psi=0$.
But, across the jump, the angle $\theta$ is shifted by $\pi$. 
This example slightly extends \Cref{assump:jump_q} by allowing the reset image to lie on an identification interface, rather than on a physical boundary.

\paragraph{Stochastic Koopman Operator}

Let $u_q(t,\theta,\psi)$ denote the Koopman observable in mode $q$.
From \eqref{eqn:Aq}, the generator for the Koopman operator is
\begin{align}
    \mathcal{A}_q u_q & = X_\theta \deriv{u_q}{\theta} + X_\psi \deriv{u_q}{\psi} + H \Delta u_q,
\end{align}
for any $q\in\Q$, 
where the expression for the Laplace-Beltrami operator is presented at \eqref{eqn:laplace_torus}.
Then, in the interior $\X_q \setminus \G_q$, the observable evolves according to \eqref{eqn:dotuq}. 

The boundary conditions follow directly from identification and reset.
First, the periodic interface provides
\begin{align}
    u_1(t,\theta,0^{+}) &= u_2(t,\theta,2\pi^{-}) \label{eqn:K_BC_periodic}.
\end{align}
On the guards, the boundary condition \eqref{eqn:BCuq} yields
\begin{align}
    u_1(t,\theta,\pi^{-}) &= u_1(t,\theta+\pi,0^{+}), \label{eqn:K_BC_pi_minus}\\
    u_2(t,\theta,\pi^{+}) &= u_1(t,\theta+\pi,0^{+}). \label{eqn:K_BC_pi_plus}
\end{align}

\paragraph{Stochastic Frobenius--Perron Operator}

Let $v_q(t,\theta,\psi)$ denote the probability density for the mode $q$. 
According to \eqref{eqn:Yq}, the vector field $Y_q\in\mathfrak{X}(\X_q)$ is given by
\begin{align*}
    Y_q & = \left(v_q X_\theta - \frac{H}{r^2} \deriv{v_q}{\theta}\right) \partial_\theta \\
        & \quad + \left(v_q X_\psi - \frac{H}{(R+r\cos\theta)^2} \deriv{v_q}{\psi}\right) \partial_\psi.
\end{align*}
Using the expression of the divergence on the torus given by \eqref{eqn:div_torus}, the generator of the Frobenius--Perron operator is
\begin{align}
    \mathcal{A}_q^* v_q & = -\deriv{(v_qX_\theta)}{\theta} - \deriv{(v_qX_\psi)}{\psi} +\frac{r\sin\theta}{R+r\cos\theta} v_qX_\theta \nonumber\\
                        &  +\frac{H}{r^2} \frac{\partial^2 v_q}{\partial \theta^2} + \frac{H}{(R+r\cos\theta)^2} \frac{\partial^2 v_q}{\partial \psi^2} -\frac{H\sin\theta}{r(R+r\cos\theta)}\deriv{v_q}{\theta}. \label{eqn:A*_torus}
\end{align}

The corresponding expression of the generator in the conservative form is
\begin{align}
	\mathcal{A}_q^* v_q & = -\frac{1}{J} \left\{ \frac{\partial (J v_q X_\theta)} {\partial\theta} + \frac{\partial (J v_q X_\psi)}{\partial\psi}  \right\} 
	\nonumber\\
	& + H \frac{1}{J} \left\{ \frac{\partial}{\partial\theta} \left(\frac{J}{g_{\theta\theta}} \frac{\partial v_q}{\partial \theta}\right)  + \frac{\partial}{\partial\psi} \left(\frac{J}{g_{\psi\psi}} \frac{\partial v_q}{\partial \psi}\right) \right\} \label{eqn:A*_torus_cons}
\end{align}
where $ J = \sqrt{\mathrm{det}g}  = r(R+r\cos\theta)$.

Next, we identify the boundary conditions from \eqref{eqn:BC_P1q} and \eqref{eqn:BC_P2q}.
The outward unit normal vector at the boundary is $N_q = \mp \frac{1}{\sqrt{g_{\psi\psi}}}\partial_\psi = \mp \frac{1}{R+r\cos\theta}\partial_\psi$, which is negative at $\Sigma_1^+$ and $\Sigma_2^+$, and positive at $\Sigma_1^-$ and $\Sigma_2^-$. 
The induced differential form at the boundary is $\nu_q = \iota^* i_{N_q} \mu_q = \mp rd\theta$, which has the same sign as $N_q$. 
Therefore, 
\begin{align}
    (Y_q\cdot N_q)\nu_q & = \left\{ r(R+r\cos\theta) v_q X_\psi -\frac{Hr}{R+r\cos\theta} \deriv{v_q}{\psi} \right\} d\theta\nonumber\\
                        & \triangleq \mathbf{Y}_q(t,\theta,\psi) d\theta. \label{eqn:YNnu}
\end{align}
for $\mathbf{Y}_q:\Re\times\mathbb{T}\rightarrow\Re$.

Since the boundary at $\psi = 0^+, \pi^+$ is negatively oriented, and the boundary at $\psi=\pi^-, 2\pi^-$ is positively oriented, 
\eqref{eqn:BC_P1q} yields
\begin{gather*}
    \int_0^{2\pi} -u_1(\theta, 0^+) \mathbf{Y}_1(\theta, 0^+) 
    + u_1(\theta, \pi^-) \mathbf{Y}_1(\theta, \pi^-)\, d\theta\\
    +\int_0^{2\pi} -u_2(\theta, \pi^+) \mathbf{Y}_2(\theta, \pi^+) 
    + u_2(\theta, 2\pi^-) \mathbf{Y}_2(\theta, 2\pi^-)\, d\theta = 0,
\end{gather*}
where we suppressed the dependency on time for brevity. 
Substituting the boundary conditions given by \eqref{eqn:K_BC_periodic}--\eqref{eqn:K_BC_pi_plus}, and changing the integration variable for the second and the third term with the periodicity of $u_1$ and $u_2$, this is rearranged into
\begin{gather*}
    \int_0^{2\pi} u_1(\theta, 0^+) \big\{ -\mathbf{Y}_1(\theta, 0^+) 
        +\mathbf{Y}_1(\theta-\pi, \pi^-)\\
        - \mathbf{Y}_2(\theta-\pi, \pi^+)
    + \mathbf{Y}_2(\theta, 2\pi^-) \} d\theta = 0.
\end{gather*}
Since this should be satisfied for an arbitrary $u_1(\theta, 0^+)$, we obtain
\begin{gather}
    \mathbf{Y}_1(\theta, 0^+)  - \mathbf{Y}_1(\theta-\pi, \pi^-) = \mathbf{Y}_2(\theta, 2\pi^-) - \mathbf{Y}_2(\theta-\pi, \pi^+), \label{eqn:torus_BC_Y}
\end{gather}
which expresses conservation of probability under the combined effect of periodic identification and reset.
In particular, the total probability current leaving each guard is transported by the reset map and reappears as incoming flux at its image, so the total probability mass on the torus is preserved.
Equivalently, the probability current is continuous under the pushforward induced by the reset map.

Next, as discussed in \Cref{ex:reset}, since the state immediately jumps at $\psi = \pi^+,\pi^-$, we have
\begin{align}
    v_1(\theta, \pi^-) = v_2(\theta, \pi^+) = 0.\label{eqn:torus_BC_v_pi}
\end{align}
Similar to \eqref{eqn:YNnu}, we can show
\begin{align}
(\nabla u_q \cdot N_q)\nu_q & = \frac{r}{R+r\cos\theta} \deriv{u_q}{\psi} d\theta.
\end{align}
Substituting this into \eqref{eqn:BC_P2q} together with \eqref{eqn:torus_BC_v_pi},  we obtain
\begin{gather}
    v_1(\theta, 0^+) = v_2(\theta, 2\pi^-), \label{eqn:torus_BC_v_0}
\end{gather}
which is interpreted as the continuity of the density over $\psi=0^+$ and $\psi=2\pi^-$. 

In summary, the density evolves according to \eqref{eqn:A*_torus} in the interior, together with the boundary conditions \eqref{eqn:torus_BC_Y}, \eqref{eqn:torus_BC_v_pi}, and \eqref{eqn:torus_BC_v_0} induced by periodic identification and reset.

\subsection{Numerical Simulation}

We discretize the governing equations using a finite-volume method on a uniform $(\psi,\theta)$ grid.
This approach preserves total mass, accommodates variable coefficients through face-centered fluxes, and naturally incorporates the boundary conditions.
Specifically, to approximate the continuous stochastic flow, we employ the conservative form of the generator \eqref{eqn:A*_torus_cons}, which consists of both advective and diffusive contributions in each coordinate direction.

Let $(i,j)$ denote the grid indices corresponding to $(\psi,\theta)$.
Along the $\psi$-direction, the advective and diffusive fluxes at the interface $i+\tfrac12$ are approximated by
\begin{align*}
	F^\text{adv}_{i+\tfrac12,j} &\approx (Jv_qX_\psi)_{i+\tfrac12,j},\\
	F^\text{diff}_{i+\tfrac12,j} &\approx -H\,J_{i+\tfrac12,j}\,g^{\psi\psi}_{i+\tfrac12,j} \frac{v_{i+1,j}-v_{i,j}}{\Delta\psi},
\end{align*}
where the advective flux is evaluated using an upwind reconstruction based on the sign of the normal velocity $X_{\psi,i+\tfrac12,j}$:
\begin{equation*}
F^\text{adv}_{i+\tfrac12,j} =
\begin{cases}
    J_{i+\tfrac12,j} X_{\psi,i+\tfrac12,j}\, v^{\text{L}}_{i+\tfrac12,j}, & X_{\psi,i+\tfrac12,j}>0,\\[4pt]
    J_{i+\tfrac12,j} X_{\psi,i+\tfrac12,j}\, v^{\text{R}}_{i+\tfrac12,j}, & X_{\psi,i+\tfrac12,j}<0,
\end{cases}
\end{equation*}
where $v^{\text{L}}_{i+\tfrac12,j}$ and $v^{\text{R}}_{i+\tfrac12,j}$ denote the reconstructed left and right interface states.
The resulting total flux is 
\[
	F_{i+\tfrac12,j} = F^\text{adv}_{i+\tfrac12,j} + F^\text{diff}_{i+\tfrac12,j}.
\]

An analogous construction yields the flux $G_{i,j+\tfrac12}$ in the $\theta$-direction using $X_\theta$ and $g^{\theta\theta}$.
All coefficients, such as $J$, $g^{ii}$, and $X_i$, are evaluated at face centers via averaging.

For the advective fluxes in both coordinate directions, we employ a fifth-order weighted essentially non-oscillatory (WENO5) reconstruction to compute the interface states~\cite{jiang1996efficient,liu1994weighted}.
This provides high-order accuracy in smooth regions while avoiding spurious oscillations near steep gradients.

Using these fluxes, the implicit time-stepping scheme is given by
\begin{align*}
	v_{i,j}^{n+1} &= v_{i,j}^n - \frac{\Delta t}{J_{i,j}\Delta\psi} \parenth{F^{n+1}_{i+\tfrac12,j}-F^{n+1}_{i-\tfrac12,j}} \\
				  &\quad - \frac{\Delta t}{J_{i,j}\Delta\theta} \parenth{G^{n+1}_{i,j+\tfrac12}-G^{n+1}_{i,j-\tfrac12}}.
\end{align*}

The jump conditions are imposed through \eqref{eqn:torus_BC_Y}, \eqref{eqn:torus_BC_v_pi}, and \eqref{eqn:torus_BC_v_0}, where the boundary flux values correspond to $\mathbf{Y}_q(\theta,\psi)=F_{i+\tfrac12,j}$.

The resulting nonlinear system at each time step is solved by a Newton--Krylov method, with Jacobian--vector products computed via automatic differentiation~\cite{kelley2003solving}.

For numerical simulation, the torus geometry is specified by the major and minor radii given by $R=2.0$ and $r=1.0$, respectively.
The drift vector field is chosen as
\begin{align}
    X_\psi(\theta,\psi)=3, \qquad
    X_\theta(\theta,\psi)=-6\sin(\theta-\psi),
\end{align}
and the diffusion coefficient is set to $H=0.5$.
The initial densities in the two modes are chosen to be the Gaussian in $\psi$ and uniform in $\theta$, namely,
\[
	v_1(0,\theta,\psi)=\mathcal{N}(\mu_1,\sigma_1^2)\cdot \mathbf{1}_\theta,
	\qquad
	v_2(0,\theta,\psi)=\mathcal{N}(\mu_2,\sigma_2^2)\cdot \mathbf{1}_\theta,
\]
where $\mathbf{1}_\theta$ denotes the uniform density on $\Sph^1$, and
\[
	\mu_1=\frac{3\pi}{4}, \quad \sigma_1=0.2, \quad \mu_2=\frac{5\pi}{4}, \quad \sigma_2=0.2.
\]
The grid resolution is set to $N_\psi=100$ and $N_\theta=100$, and the simulation is run up to $T=3$ using $N_t=1000$ time steps.

\begin{figure}
	\centering

	\subfloat[Density at $t=0 \tau$]{
		\scriptsize
		\begin{tikzpicture}
			\node at (0,0) {
			\includegraphics[
					width=0.46\columnwidth,
					trim={3cm 4cm 3cm 5cm},
					clip
				]{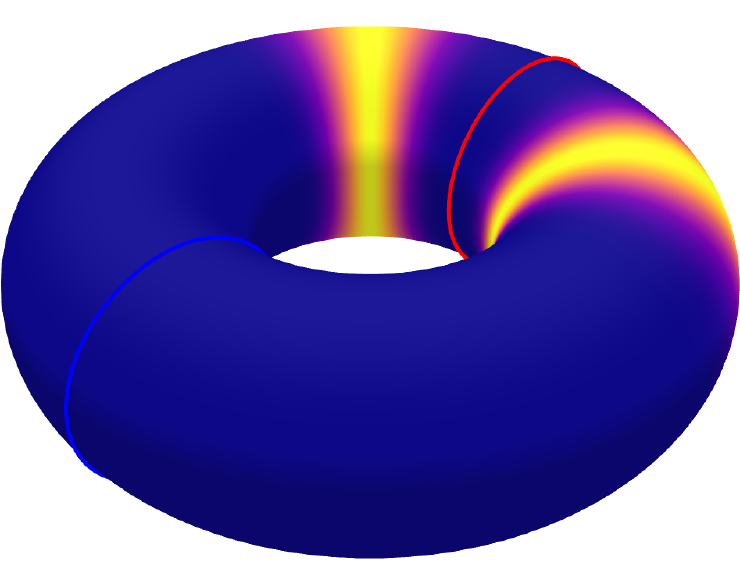}};
			\node at (1.6,0.7) {$\psi=\frac{3}{4}\pi$};
			\node at (0.0,1.6) {$\psi=\frac{5}{4}\pi$};
			\node at (-1.6, -1.3) {$\psi=0$};
			\node at (1.2,1.4) {$\G$};
		\end{tikzpicture}}\hfill
	\foreach \t [count=\i] in {2,4,8,12,18,24,48} {%
		\subfloat[Density at $t=\t \tau$]{
			\includegraphics[
			width=0.46\columnwidth,
			trim={3cm 4cm 3cm 5cm},
			clip
			]{./Figs/frames_torus/frame_\t.pdf}
		}
		\ifodd\i
			\\
		\else
			\hfill
		\fi
	}%

	\caption{Evolution of the density ($\tau = T/50 = 0.06$)}
	\label{fig:torus_torus}
\end{figure}

The resulting evolution of the density is illustrated in \Cref{fig:torus_torus},
where the red circle corresponds to the guard at $\psi=\pi$, and the blue circle represents the image of the reset at $\psi=0$.
In \Cref{fig:torus_torus}(a), the initial density is visualized as two rings centered at $\psi=\frac{3}{4}\pi$ and $\psi=\frac{5}{4}\pi$, respectively. 
The chosen drift field $X_\psi$ induces counterclockwise rotation, so the portion of the initial density at $\psi=\frac{3}{4}\pi$ is transported toward the guard (red circle) and subsequently reset to $\psi=0$ (blue circle) at the opposite phase of $\theta$, as shown in \Cref{fig:torus_torus}(b)--(c).
Meanwhile, the portion centered at $\psi=\frac{5}{4}\pi$ also rotates, but becomes less pronounced due to diffusion. 
After the jump, the density continues to rotate counterclockwise, while the drift component $X_\theta$ introduces a twisting effect, as illustrated in \Cref{fig:torus_torus}(d)--(f).
Over time, the combined effects of transport, reset, and diffusion lead to a steady-state behavior, forming a twisted band connecting the guard and the reset image.

\begin{figure}
    \centering
    \includegraphics[width=0.75\columnwidth]{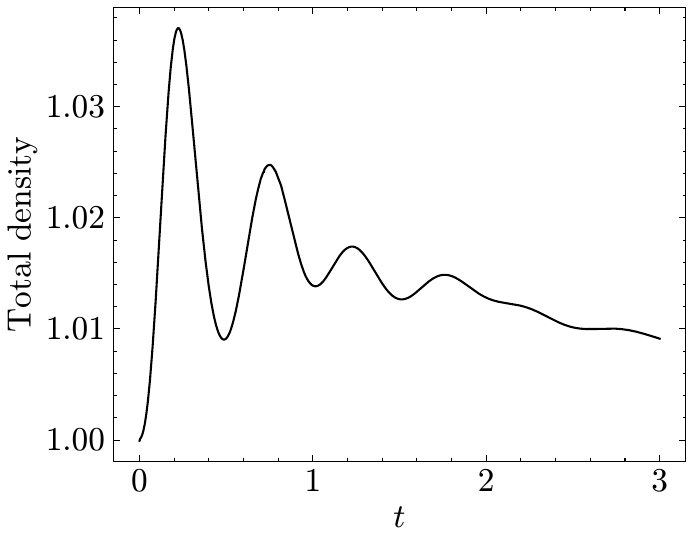}
	\caption{Evolution of the total probability mass}
    \label{fig:torus_totaldensity}
\end{figure}

To validate the numerical solution, we examine the total probability mass at each time step.
As shown in \Cref{fig:torus_totaldensity}, the proposed numerical scheme preserves the total mass with high accuracy.

Finally, we perform Monte Carlo simulations with a large ensemble of particles to approximate the density evolution.
The particle-based approximation, shown in \Cref{fig:comp_MC} on the $(\psi,\theta)$ plane, demonstrates excellent agreement with the finite-volume results.

This example demonstrates how the proposed formulation captures stochastic transport, diffusion, and reset within a unified framework on a manifold.
The finite-volume discretization preserves mass and accurately resolves the interaction between continuous flow and discrete jumps through the flux-based boundary conditions.
The agreement with Monte Carlo simulations further confirms that the weak-form treatment of the reset correctly transfers probability across the guard.
Overall, this example illustrates that the proposed approach provides a consistent and numerically tractable description of stochastic hybrid dynamics on a differentiable manifold.

\begin{figure}
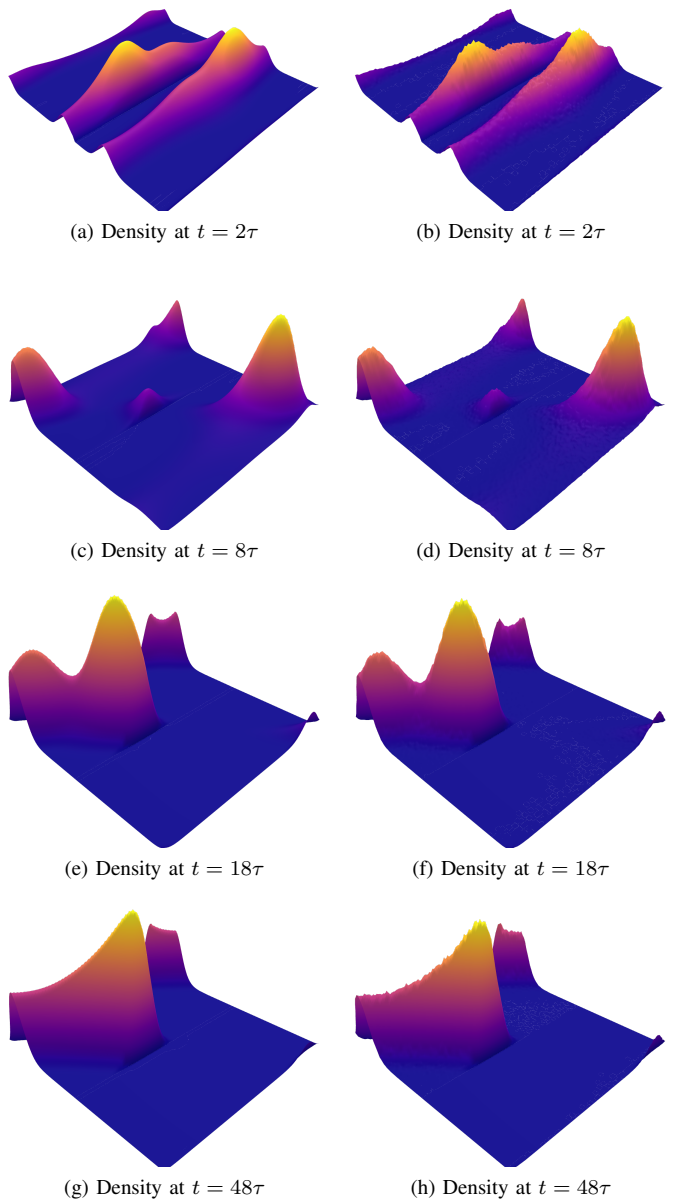

	\centering
	\foreach \n in {2, 8, 18, 48}{%
		\subfloat[Density at $t=\n \tau$]{\includegraphics[width=0.48\columnwidth, trim={1.5cm 0cm 1.5cm 6cm}, clip]{./Figs/frames_surface/frame_\n.pdf}} \hfill
	 	\subfloat[Density at $t=\n \tau$]{\includegraphics[width=0.48\columnwidth, trim={1.5cm 0cm 1.5cm 6cm}, clip]{./Figs/frames_MC_surface/frame_\n.pdf}}\\
	}%
	\caption{Verification of the computed density (left column) against Monte-Carlo simulation (right column)}\label{fig:comp_MC}
\end{figure}

\section{Conclusions}\label{sec:conclusions}

This paper presents a transfer operator framework for stochastic hybrid systems with guard-induced resets, encompassing both the Koopman and Frobenius--Perron operators.  
Exploiting their duality, we derived a unified description in which observables and probability densities evolve under adjoint generators corresponding to the backward and forward Kolmogorov equations.  
Focusing on uncertainty propagation, we developed a Frobenius--Perron formulation that couples within-mode Fokker--Planck dynamics with explicit inter-mode transfer terms.
The proposed framework is formulated intrinsically on differentiable manifolds, ensuring consistency with the underlying geometric structure.

Future work includes extensions to Bayesian estimation of hybrid systems, and scalable computational scheme for uncertainty propagation of higher-order hybrid systems. 

\bibliographystyle{IEEEtran}
\bibliography{ref} 

\appendix

\subsection{Divergence Theorem}\label{sec:divergence}

The divergence of a vector field can be obtained by the Lie derivative, i.e., $\div_\mu(X)\mu =\mathcal{L}_X \mu$. 
Also, from Cartan's formula, we have $\mathcal{L}_X \mu = d i_X \mu + i_X d\mu$, where $d\mu=0$ as it is a top form.  
By combining these, $\div_\mu(X)\mu = d i_X\mu$. 
According to Stoke's theorem, we obtain
\begin{align}
    \int_\X \div_\mu(X) \mu = \int_{\partial \X} \iota^* (i_X \mu),\label{eqn:div_thm0}
\end{align}
where $\iota: \partial\X\rightarrow\X$ denotes the inclusion map, and we take the pullback of $i_X\mu$ with the inclusion map to explicitly state that $i_X\mu$ is considered as a differential form on the boundary $\partial \X$. 
Also, the integration on the right hand side of \eqref{eqn:div_thm} is taken using the induced orientation on $\partial \X$. 

More explicitly, let $N:\partial \X \rightarrow \T \partial \X$ be the outward-pointing, unit normal vector field on $\partial \X$, and let $(v_1, \ldots, v_{n-1})\in (\T_x \partial \X)^{n-1}$ be a basis of $\T_x\partial\X$ for $x\in\partial \X$. 
Then, $(v_1, \ldots, v_{n-1})$ is positively oriented in $\T_x\partial \X$, if $(N, v_1,\ldots, v_{n-1})$ is positively oriented in $\T_x\X$, or $\mu(N, v_1,\ldots, v_{n-1}) > 0$. 

Further, we can decompose the vector field $X$ into $X=X_{\parallel} + X_{\perp}$, 
where $X_\parallel$ and $X_\perp$ denote the projection of $\X$ onto $\T_x\partial\X$ and the projection along $N$, respectively, i.e., 
\begin{align*}
    X_\perp = (X\cdot N) N,\quad X_\parallel = X- X_\perp.
\end{align*}
Then, using the linearity of a differential form, 
\begin{align*}
    \iota^* (i_X\mu) (v_1, \ldots v_{n-1}) & = \iota^* (i_{X_\perp} \mu) (v_1, \ldots v_{n-1}) \\
                                           & \quad + \iota^* (i_{X_\parallel} \mu) (v_1, \ldots v_{n-1}),
\end{align*}
where the last term vanishes as $X_\parallel$ can be written as a linear combination of $(v_1, \ldots, v_{n-1})$. 
These imply
\begin{align*}
    \iota^* (i_X\mu) = \iota^* (i_{X_\perp} \mu) = (X\cdot N) \; \iota^* (i_N \mu).
\end{align*}
Let $\nu = \iota^* (i_N\mu)$ be a differential form on $\partial \X$.
Finally, \eqref{eqn:div_thm0} is rewritten into
\begin{align}
    \int_\X \div_\mu(X) \mu = \int_{\partial \X} (X\cdot N) \nu.\label{eqn:div_thm}
\end{align}

\subsection{Integration by Parts}

For $f:\X\to\Re$, the product rule of divergence is given by
\begin{align}
    \div_\mu (fX) = f \div_\mu (X) + \mathcal{L}_X f.\label{eqn:div_prod}
\end{align}
Using \eqref{eqn:div_thm},
\begin{align}
    \int_\X \div_\mu (fX) \mu = \int_{\partial \X} f(X\cdot N) \nu. \label{eqn:div_tmp}
\end{align}
Combining \eqref{eqn:div_prod} and \eqref{eqn:div_tmp}, we obtain the following equation for integration by parts,
\begin{align}
    \int_\X f \div_\mu(X)\, \mu  = \int_{\partial \X} f(X\cdot N)\, \nu  - \int_\X \mathcal{L}_X f\, \mu. \label{eqn:integ_part}
\end{align}

\begin{IEEEbiography}
    {Tejaswi K. C.} received his Ph.D. in Mechanical and Aerospace Engineering at The George Washington University in 2024.
    He is continuing as a postdoctoral associate in Washington, D.C., U.S.A.
    His recent research consists of theoretical and computational methods in dynamics, optimization, data-driven methods and bio-inspired systems. 
\end{IEEEbiography}

\begin{IEEEbiography}
	{William A. Clark}
	William A. Clark received his B.S. degree in mechanical engineering from Ohio University in 2015 and his Ph.D. in a applied and interdisciplinary mathematics from the University of Michigan in 2020. 
	Between 2020 and 2023, he was a Post doctorate Research Fellow in the Department of Mathematics at Cornell University. 
	He is currently an Assistant Professor of Mathematics at Ohio University. His research interests include geometric mechanics, control of nonlinear systems, analysis of discontinuous systems, and geometric structures in learning.
\end{IEEEbiography}
\vskip 0pt plus -1fil
\begin{IEEEbiography}
    {Taeyoung Lee}
    		(Senior Member, IEEE) received the Ph.D. degree in aerospace engineering and the M.S. degree in mathematics from the University of Michigan, Ann Arbor, MI, USA, in 2008.
		He is a professor of the Department of Mechanical and Aerospace Engineering at the George Washington University, Washington, D.C., USA.
		His research interests include geometric mechanics and control with applications to complex aerospace systems.
\end{IEEEbiography}

\end{document}